\def\s{\mathbb{S}}
\def\R{\mathbb{R}}
\def\Q{\mathbb{Q}}
\DeclareMathOperator{\arctanh}{arctanh} 
\DeclareMathOperator{\sech}{sech}
\DeclareMathOperator{\sn}{sn}
\DeclareMathOperator{\cn}{cn}
\DeclareMathOperator{\dn}{dn}
\DeclareMathOperator{\am}{am}
\newtheorem{theorem}{Theorem}[section]
\newtheorem{corollary}[theorem]{Corollary}
\theoremstyle{definition}
\newtheorem{remark}[theorem]{Remark}
\newtheorem{example}[theorem]{Example}
\numberwithin{equation}{section}
\begin{document}

\title[Spherical curves]{Spherical curves whose curvature \\ depends on distance to a great circle\\
\vspace{0.5cm} \normalfont{\it D\lowercase{edicated to }P\lowercase{rofessor }\'O\lowercase{scar }G\lowercase{aray, in memoriam}}}

\author[I. Castro]{Ildefonso Castro}
\address{Departamento de Matem\'{a}ticas \\
Universidad de Ja\'{e}n \\
23071 Ja\'{e}n, Spain and IMAG, Instituto de Matemáticas de la Universidad de Granada.}
\email{icastro@ujaen.es}

\author[I. Castro-Infantes]{Ildefonso Castro-Infantes}
\address{Departamento de Matem\'aticas \\
Universidad de Alicante \\
Alicante, Spain} 
\email{ildefonso.castro@ua.es}

\author[J. Castro-Infantes]{Jes\'{u}s Castro-Infantes}
\address{Departamento de Geometr\'{\i}a y Topolog\'{\i}a \\
Universidad de Granada \\
18071 Granada, Spain} 
\email{jcastroinfantes@ugr.es}


\subjclass[2010]{Primary 53A04; Secondary 74H05}


\date{}

\begin{abstract}
Motivated by a problem posed by David A. Singer in 1999 and by the elastic spherical curves, we study the spherical curves whose curvature is expressed in terms of the distance to a great circle (or from a point). By introducing the notion of {\em spherical angular momentum}, we provide new characterizations of some well known curves, like the mentioned elastic curves, spherical catenaries, loxodromic-type spherical curves, the Viviani's curve, and the spherical Archimedean spirals curves. Furthermore, we show that they may be obtained as critical points of some energy curvature functionals. We also find out several new families of spherical curves whose intrinsic equations are expressed in terms of elementary functions or Jacobi elliptic functions, and we are able to get arc length parametrizations of them.
\end{abstract}

\maketitle


\section{Introduction}

The plane curves are uniquely determined up to rigid motion by its intrinsic equation giving its curvature $\kappa $ as a function of its arc-length. However, such curves are impossible to find explicitly
in practice in most cases, due to the difficulty in solving the three quadratures appearing in the integration process. In \cite{S99}, David A.\ Singer considered a different sort of problem:
 {\em Can a plane curve be determined if its curvature is given in terms of its
position?} 

Probably, the most interesting solved
problem in this setting corresponds to the Euler elastic curves, 
whose curvature is proportional to one of the coordinate functions,
e.g.\ $ \kappa(x, y) = c\, y $.
Motivated by the above question and by the classical elasticae, the authors studied in \cite{CCI16}
the plane curves whose curvature depends on the distance to a line (say the $x$-axis, and so $\kappa=\kappa(y)$) and in \cite{CCIs17}
the plane curves whose curvature depends on the distance from a point (say the origin, and so $\kappa=\kappa(r)$, $r =\sqrt{x^2+y^2}$) requiring in both cases the computation of three quadratures too.
But the simple case $\kappa (r)=r$, where elliptic integrals appear, illustrated that the
fact that the corresponding differential equation is integrable by quadratures does not mean that it is easy to perform
the integrations. In \cite{S99}, only the very pleasant special case of the classical Bernoulli lemniscate, $r^2=3 \cos
2\theta$ in polar coordinates, was solved explicitly, where the corresponding elliptic integral becomes elementary.

In this paper, we pay our attention to the Singer's problem version for curves lying in a sphere: 
\begin{quote} 
{\em Can a spherical curve be determined when its curvature is given in terms of its position?}
\end{quote}
 The geodesic curvature $\kappa$ of a spherical curve $\xi $ given as a function of its arc length $s$ determines the curve (up to isometries of the sphere) by integration of its Frenet equations.
 However, it is expectable that if the curvature $\kappa $ of $\xi =(x,y,z)$ is given by a function of its position, i.e. $\kappa = \kappa (x,y,z)$, the situation becomes quite complicated since the general form of this problem is equivalent to solving the non linear differential equation
$$
\left| 
\begin{array}{ccc}
x(s) & y(s) & z(s) \\
\dot x(s) & \dot y(s) & \dot z(s) \\
\ddot x(s) & \ddot y(s) & \ddot z(s)
\end{array}
\right| 
=\kappa (x(s),y(s),z(s))
$$
with the constraints 
$$
x(s)^2+y(s)^2+z(s)^2=1 \quad\text{and}\quad\dot x(s)^2+\dot y(s)^2+\dot z(s)^2=1.
$$ 

The purpose of this article is the study of the aforementioned cases of the classical Singer's problem in the setting of spherical curves, considering geodesics in the role of lines. Concretely, we consider a curve $\xi =(x,y,z)$ lying in the unit sphere $\mathbb{S}^2 $ centred at the origin and write $z=\sin \varphi$ ($\varphi $ being the latitude of $\xi$); we aim to control those curves $\xi $ whose geodesic curvature $\kappa$ satisfies the condition $\kappa \!=\! \kappa (\varphi) \!\Leftrightarrow \! \kappa \!=\! \kappa (z)$. We point out that this condition includes both types of problems involving curvature and distance, since  $\varphi $ is the distance to the equator (the great circle $\varphi \!=\! 0 \!\Leftrightarrow  \!z\!=\!0$) and the colatitude $\pi/2 -\varphi$ is the distance to the North pole (the point  (0,0,1)).

As in the Euclidean case, it will be necessary again the computation of three quadratures when $\kappa (z)$ is a continuous function (see Theorem \ref{quadratures})  and the key tool will be the notion of {\em spherical angular momentum}, which completely determines a spherical curve (up to a family of distinguished isometries) in relation with its relative position with respect to a fixed geodesic.

In this way, we find out several interesting new families of spherical curves whose intrinsic equations can be expressed in terms of elementary or Jacobi elliptic functions. 
We also provide new characterizations of some well known curves, like elastic-type spherical curves, spherical catenaries, loxodromic-type spherical curves, the Viviani's curve, and the spherical Archimedean spirals curves. In addition, we show that they may be obtained as critical points of some energy curvature functionals. 

In a forthcoming paper \cite{CCI21} we have studied helicoidal minimal surfaces in $\mathbb{S}^3$ by considering surfaces  that are invariant under a helicoidal motion in the $3$-sphere, that is, the composition of two independent rotations in $\mathbb{S}^3$.



%


\section{Spherical curves such that $\kappa = \kappa (z)$ and the spherical angular momentum}\label{Sect2}
We introduce a smooth function associated to any spherical curve, which completely determines it (up to a family of distinguished isometries) in relation with its relative position with respect to a fixed geodesic.

Indeed, let  $\xi=\xi(s)\colon I\subseteq \R \rightarrow \s^2$ be an immersed curve parametrized by the arc length,
i.e.\ $|\xi (s)|=|\dot \xi (s)|=1$, for any $s\in I$, where $I$ is some interval in $\R$. 
Along the paper, $\dot \,$ will denote derivative with respect to $s$ and $\langle
\cdot, \cdot \rangle $ and $\times$ the Euclidean inner product and the cross product in $\R^3$ respectively. Let $T=\dot \xi $ be the unit tangent vector and $N=\xi \times \dot
\xi$ the unit normal vector of $\xi$. If $\nabla$ is the connection in $\s^2$, the oriented geodesic curvature $\kappa$ of $\xi$
is given by the Frenet equation $\nabla_T T = \kappa N$, which implies
\begin{equation}\label{Frenetbis}
 \ddot \xi =-\xi + \kappa N, \quad \dot N=-\kappa \,  \dot \xi
\end{equation}
and so $\kappa = \det (\xi, \dot \xi, \ddot \xi)$.

We are interested in the geometric condition that the curvature of $\xi$ depends on the distance to a geodesic of $\s^2$. If ${\bf e}\in \R^3 $ is a unit length vector, then $\langle \xi, {\bf e} \rangle$ is the signed distance to the orthogonal plane to $\bf e$ passing through the origin.  Without restriction, we consider ${\bf e}:=(0,0,1)$ and write $\xi=(x,y,z)$ with $x^2+y^2+z^2=1$. 
So we can pay our attention to study the condition $\kappa=\kappa(z)$ since $z=\langle \xi, {\bf e} \rangle$ represents the signed distance to the great circle $\s^2 \cap \{ z=0 \}$.
Concretely, we use geographical coordinates in $\s^2$ and write 
$$\xi=(\cos \varphi  \cos \lambda , \cos \varphi  \sin \lambda , \sin \varphi ), \ -\pi/2 \leq \varphi \leq \pi/2, \, -\pi < \lambda \leq \pi .$$ 
Then it is interesting to notice that the latitude $\varphi$ is the signed distance to the equator $\s^2 \cap \{ z=0 \}\equiv \varphi =0$ and, in addition, the colatitude $\pi/2-\varphi$ gives the distance to the North pole $(0,0,1)$.

At a given point $\xi (s)$ on the curve, we introduce the {\em spherical angular momentum} (with respect to the $z$-axis) $\mathcal K (s)$ as the (signed) volume of the parallelepiped spanned by the position $\xi (s)$, the unit tangent $T(s)$ and the vector ${\bf e}:=(0,0,1)$. Concretely, we define
\begin{equation}\label{spherical momentum}
\mathcal K(s) := - \det (\xi(s),T(s), {\bf e})=-\langle N(s), {\bf e} \rangle  = \dot x(s) y(s) -x(s) \dot y(s).
\end{equation}
In physical terms, as a consequence of Noether's Theorem (cf.\ \cite{A78}), $\mathcal K$ may be described as the angular momentum of a particle of unit mass with unit speed and spherical trajectory $\xi (s)$. We point out that $\mathcal K$ assumes values in $[-1,1]$ and it is well defined, up to the sign, depending on the orientation of the normal to $\xi$. 
In geographical coordinates, $\mathcal K$ is given by
\begin{equation}\label{K momentum}
\mathcal K = - \dot \lambda  \cos^2 \varphi .
\end{equation}

The unit-speed condition on $\xi$ implies that $\dot \varphi^2  + \dot \lambda^2  \cos^2 \varphi = 1$ and, assuming $\varphi$ is non constant and using \eqref{K momentum}, we deduce that
\begin{equation}\label{dif}
ds=\frac{d\varphi}{\sqrt{1-\dot \lambda ^2  \cos^2 \varphi}}=\frac{\cos \varphi \, d\varphi}{\sqrt{\cos^2 \varphi-\mathcal K^2}}=\frac{dz}{\sqrt{1-z^2-\mathcal K^2}}
\end{equation}
and 
\begin{equation}\label{long}
d\lambda= - \frac{\mathcal K ds}{\cos^2 \varphi}=  \frac{\mathcal K ds}{z^2-1} .
\end{equation}
Hence, given $\mathcal K=\mathcal K (z)$ as an explicit function, looking at \eqref{dif} and \eqref{long}, one may attempt to compute $z(s)$ (and so $\varphi (s)$) and $\lambda (s) $ in three steps: integrate \eqref{dif} to get $s=s(z)$, invert to get $z=z(s)$ and integrate \eqref{long} to get $\lambda =\lambda (s)$. We remark that the integration constants appearing in \eqref{dif} and \eqref{long} simply mean a translation of the arc parameter and a rotation around the $z$-axis respectively. 

In addition, using \eqref{Frenetbis} and \eqref{spherical momentum}, we have that 
$\dot {\mathcal  K} = - \langle \dot N, {\bf e} \rangle = \kappa \langle \dot \xi, {\bf e} \rangle = \kappa \dot z $ and, if we take into account the assumption $\kappa=\kappa(z)$ (being $z$ non constant), we finally arrive at
\begin{equation}\label{anti K}
d\mathcal K= \kappa (z) dz,
\end{equation}
that is,  $\mathcal K (z)$ can be interpreted as an anti-derivative of $\kappa (z)$.

As a summary, we can determine by quadratures in a constructive explicit way the spherical curves such that $\kappa=\kappa(z)$, in the spirit of  \cite[Theorem 3.1]{S99}.
\begin{theorem}\label{quadratures}
Let $\kappa=\kappa(z)$ be a continuous function.
Then the problem of determining locally a spherical curve whose curvature is $\kappa(z)$
---$z$ representing the (non constant) signed distance to the great circle $z\!=\!0$---
with spherical angular momentum $\mathcal K (z)$ satisfying \eqref{anti K}, 
is solvable by quadratures considering the unit speed curve $\!\xi(s)\!=\!(x(s),y(s),z(s))$, with
$x(s)=\cos \varphi (s) \cos \lambda (s), \, y(s)= \cos \varphi (s) \sin \lambda (s), 
\,  z(s)= \sin \varphi (s)$, where $\varphi(s)$ and $\lambda(s)$ are obtained through \eqref{dif} and \eqref{long} after inverting $s=s(z)$. Such a curve is uniquely determined by $\mathcal K (z)$ up to a rotation around the $z$-axis (and a translation of the arc parameter $s$).
\end{theorem}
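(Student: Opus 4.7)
The proof is essentially a matter of verifying that the constructive procedure sketched in the paragraphs preceding the statement actually produces a spherical curve with the required properties, together with checking uniqueness. My plan is as follows.

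\textbf{Step 1 (construction).} Given the continuous function $\kappa(z)$, first define $\mathcal{K}(z)$ by antidifferentiation according to \eqref{anti K}; this fixes $\mathcal{K}$ up to one additive constant. On any interval $J$ of $z$-values where $1-z^2-\mathcal{K}(z)^2 > 0$, formula \eqref{dif} defines $s(z)$ by quadrature. Since $ds/dz = 1/\sqrt{1-z^2-\mathcal{K}^2} > 0$ on $J$, the function $s(z)$ is strictly monotone and admits a local smooth inverse $z=z(s)$. Substituting $z(s)$ into \eqref{long} and integrating produces $\lambda(s)$ up to a second additive constant. Finally, one sets $\varphi(s)=\arcsin z(s)$ and
\begin{equation*}
\xi(s) := \bigl(\cos\varphi(s)\cos\lambda(s),\, \cos\varphi(s)\sin\lambda(s),\, \sin\varphi(s)\bigr).
\end{equation*}

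\textbf{Step 2 (sphericity and arc-length).} By construction $|\xi|\equiv 1$. To check that $\xi$ is unit-speed, I would compute $\dot{\varphi}^2+\dot{\lambda}^2\cos^2\varphi$ using the defining relations $\dot z=\cos\varphi\cdot\dot\varphi=\sqrt{1-z^2-\mathcal{K}^2}$ (from \eqref{dif}) and $\dot\lambda=\mathcal{K}/(z^2-1)$ (from \eqref{long}), and verify algebraically that this sum equals $1$, which is immediate.

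\textbf{Step 3 (curvature identification).} For the constructed $\xi$, the spherical angular momentum is given by \eqref{spherical momentum}, and the computation leading to \eqref{K momentum} identifies it with $-\dot\lambda\cos^2\varphi$, which equals $\mathcal{K}(z(s))$ by choice of $\lambda$. Applying \eqref{Frenetbis} to $\xi$ yields $\dot{\mathcal{K}}=\kappa_\xi(s)\,\dot z$ for the actual geodesic curvature $\kappa_\xi$ of $\xi$. On the other hand, differentiating $\mathcal{K}(z(s))$ along $\xi$ and using \eqref{anti K} gives $\dot{\mathcal{K}}=\kappa(z)\,\dot z$. On the open set where $\dot z\neq 0$ this forces $\kappa_\xi=\kappa(z)$, and continuity extends the identity everywhere on the domain of definition.

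\textbf{Step 4 (uniqueness).} If $\tilde\xi$ is another unit-speed spherical curve satisfying $\kappa_{\tilde\xi}=\kappa(z)$ with the same spherical angular momentum $\mathcal{K}(z)$, then its latitude and longitude satisfy exactly the same quadrature relations \eqref{dif}--\eqref{long}; they must therefore coincide with $\varphi$ and $\lambda$ up to an additive constant in $s$ (a translation of the arc parameter) and an additive constant in $\lambda$ (a rotation around the $z$-axis). This yields the stated uniqueness. The main technical subtlety lies in Steps~2--3, namely controlling the local regularity near the values of $z$ where $1-z^2-\mathcal{K}^2$ vanishes, that is, near the points where $\xi$ becomes tangent to a parallel; away from those points everything is smooth and the verification is purely algebraic.
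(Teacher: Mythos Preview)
Your proposal is correct and follows essentially the same approach as the paper. The paper's proof consists of the derivation of \eqref{dif}, \eqref{long} and \eqref{anti K} in the paragraphs preceding the statement, and the theorem is presented as a summary of that construction; you have simply made the converse verification (that the curve built from the quadratures really has the prescribed curvature and angular momentum) and the uniqueness argument more explicit than the paper does.
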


	In other words, any spherical curve $\xi=(x,y,z):I\subseteq \R \rightarrow \s^2$, with $z$ non-constant, is uniquely determined by its spherical angular momentum $\mathcal K$ as a function of its coordinate $z$, that is, by $\mathcal K= \mathcal K(z)$. The uniqueness is modulo rotations around the $z$-axis. Moreover, the curvature of $\xi $ is given by $\kappa (z)=\mathcal K' (z)$.

\begin{remark}\label{c}
If we prescribe a continuous function $\kappa=\kappa(z)$ as curvature, the proof of Theorem~\ref{quadratures} clearly implies the computation of three quadratures, following the sequence:
\begin{enumerate}[\rm (i)]
\item[\rm (i)] A one-parameter family of anti-derivatives of $\kappa (z)$:
$$
\int \! \kappa (z) dz = \mathcal K(z).
$$
\item[\rm (ii)] Arc-length parameter $s$ of $\xi =(x,y,z) $ in terms of $z$, defined ---up to translations of the parameter--- by the integral:
$$
s=s(z)=\int\!\frac{dz}{\sqrt{1-z^2-\mathcal K(z)^2}},
$$
where $-\sqrt{1-z^2}<\mathcal K(z)<\sqrt{1-z^2}$, and
inverting $s=s(z)$ to get $z=z(s)$ and so the latitude of $\xi $ is
$
\varphi(s)=\arcsin z(s)
$.
\item[\rm (iii)] Longitude of $\xi=(\cos \varphi  \cos \lambda , \cos \varphi  \sin \lambda , \sin \varphi )$ in terms of $s$, defined ---up to a rotation around the $z$-axis--- by the integral:
$$
\lambda (s)=\int \! \frac{\mathcal K(z(s))}{z(s)^2-1}ds ,
$$
where $|z(s)|<1$.
\end{enumerate}
We note that we get a one-parameter family of spherical curves satisfying $\kappa=\kappa(z)$ according to the spherical angular momentum $\mathcal K (z)$ chosen in {\rm (i)} and verifying $\mathcal K(z)^2 +z^2 < 1 $. It will distinguish geometrically the curves inside a same family by their relative position with respect to the equator (or the $z$-axis). 
\end{remark}


We now show two illustrative examples applying steps {\rm (i)-(iii)} of the algorithm described in Remark~\ref{c}:

\begin{example}[$\kappa\!\equiv\! 0$]
Then $\mathcal K \!\equiv \! c \in \R$, and $s\!=\!\int \frac{dz}{ \sqrt{1-c^2-z^2}}=\arcsin \frac{z}{\sqrt{1-c^2}} $, with $|c|<1$.
Therefore $z(s)=\sqrt{1-c^2} \sin s $. This gives that $\lambda(s)=-\arctan (c \tan s)$ and finally $\xi (s)=(\cos s,-c \sin s, \sqrt{1-c^2} \sin s) $.  It corresponds to the great circle $\s^2 \cap \{ \sqrt{1-c^2}\,y+c\,z=0 \}$.
Up to rotations around the $z$-axis, they provide arbitrary great circles in $\s^2$, except the equator. As a consequence of Theorem~\ref{quadratures}, we deduce that the great circle $\s^2 \cap \{ \sqrt{1-c^2}\,y+c\,z=0 \}$ is the only spherical curve (up to rotations around the $z$-axis) with constant spherical angular momentum $\mathcal K\!\equiv\! c$ (see Figure~\ref{GreatCircles}). 

\begin{figure}[h!]
\begin{center}
\includegraphics[height=3cm]{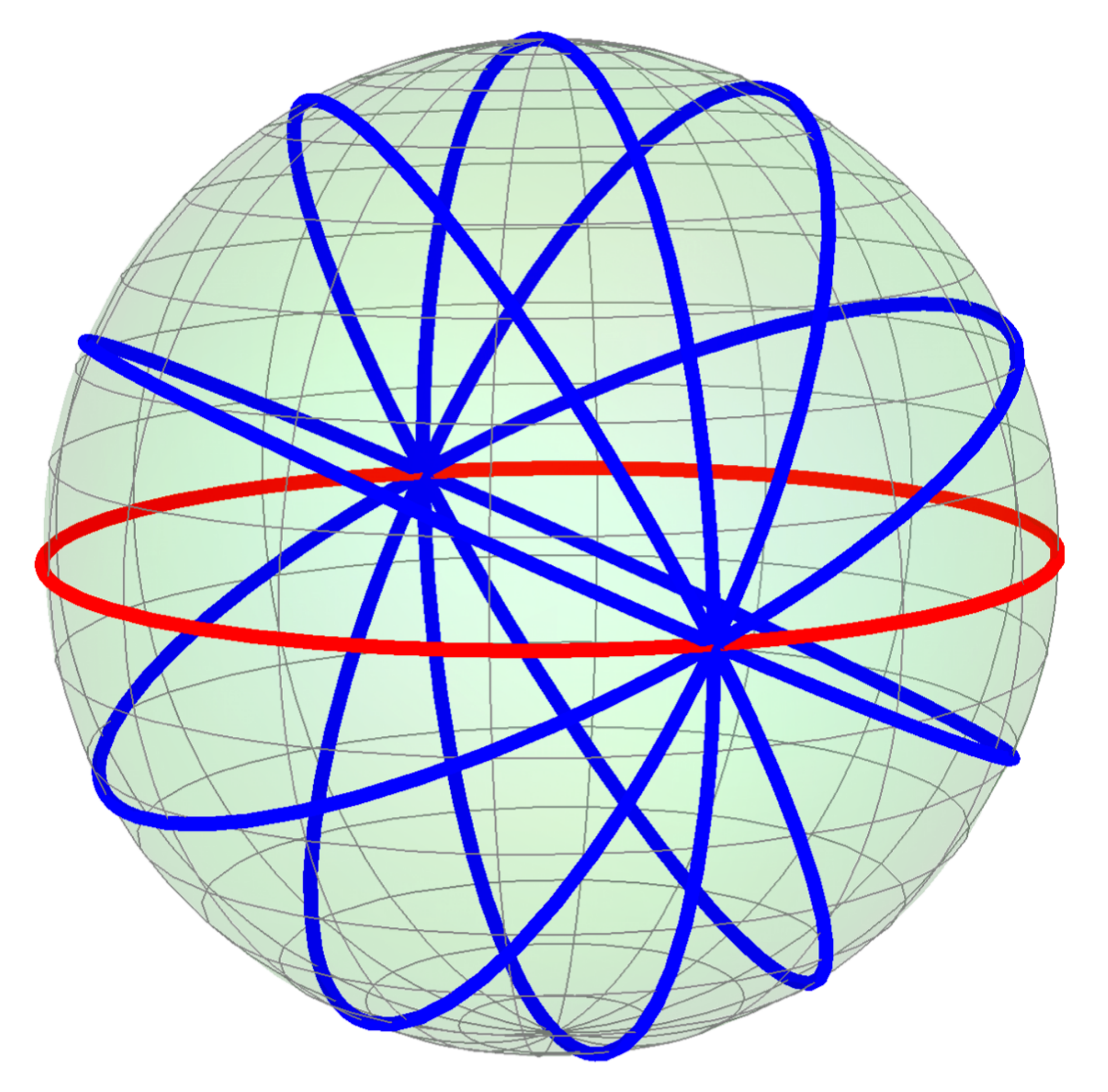}
\caption{Great circles $\s^2 \cap \{ \sqrt{1-c^2}\,y+c\,z=0 \}$: $\mathcal K\equiv c\in (-1,1)$.}
\label{GreatCircles}
\end{center}
\end{figure}
For example, taking $c=0$, we get the meridian $\s^2 \cap \{ y=0 \}$ and hence the meridians are the only spherical curves with null spherical angular momentum. We notice that the limiting cases $c=\pm 1$ lead to the equator $\s^2 \cap \{ z=0 \}$.
\end{example}
\begin{example}[$\kappa \equiv k_0 >0 $]
Now $\mathcal K (z)= k_0 z+c$, $c\in \R$. In this case, it is not difficult to get that 
$$z(s)=\frac{1}{1+k_0^2}\left( \sqrt{1-c^2+k_0^2}\, \sin \left(\sqrt{1+k_0^2}\, s\right)-c \, k_0\right)$$
with $|c|<\sqrt{1+k_0^2}$. 
But the expression of $\lambda$ is far from trivial and depends on the values of $c$. After a long computation, we deduce:
\begin{itemize}
	\item If $|c|\neq k_0$:
	$\lambda(s)=\arctan \left(\frac{\sqrt{1 - c^2 + k_0^2} + (1 - c k_0 + k_0^2) \tan({\frac12 \sqrt{1 + k_0^2} s})}{(k_0-c)\sqrt{1 + k_0^2}}\right)+$
	
	\hspace{2.5cm}$+\arctan\left(\frac{\sqrt{1 - c^2 + k_0^2} + (1 + c k_0 + k_0^2) \tan({\frac12 \sqrt{1 + k_0^2} s})}{(k_0+c)\sqrt{1 + k_0^2}}\right).$
	\item If $c=k_0$:
	$\lambda(s)= \arctan\left( \frac{1 - (1 + 2 k_0^2) \tan(\frac12 \sqrt{1 + k_0^2} s)}{2 k_0 \sqrt{1 + k_0^2}}\right).$
	\item If $c=-k_0$:
	$\lambda(s)= \arctan\left( \frac{1 + (1 + 2 k_0^2) \tan(\frac12 \sqrt{1 + k_0^2} s)}{2 k_0 \sqrt{1 + k_0^2}}\right).$
\end{itemize}
Of course, up to rotations around the $z$-axis, we get all the non-parallel small circles of $\s^2$.
The parameter $c$ distinguishes the position of the circle with respect to the equator.
If $0\leq |c|<1$ the circles intersect the equator transversely; in particular, when $c=0$ we obtain the orthogonal circles to the equator. 
If $c=\pm 1$, the circles are tangent to the equator.
Finally, if $1<|c|<\sqrt{1+k_0^2}$, the circles do not intersect the equator (see Figure~\ref{SmallCircles}).

\begin{figure}[h!]
	\begin{center}
		\includegraphics[height=3cm]{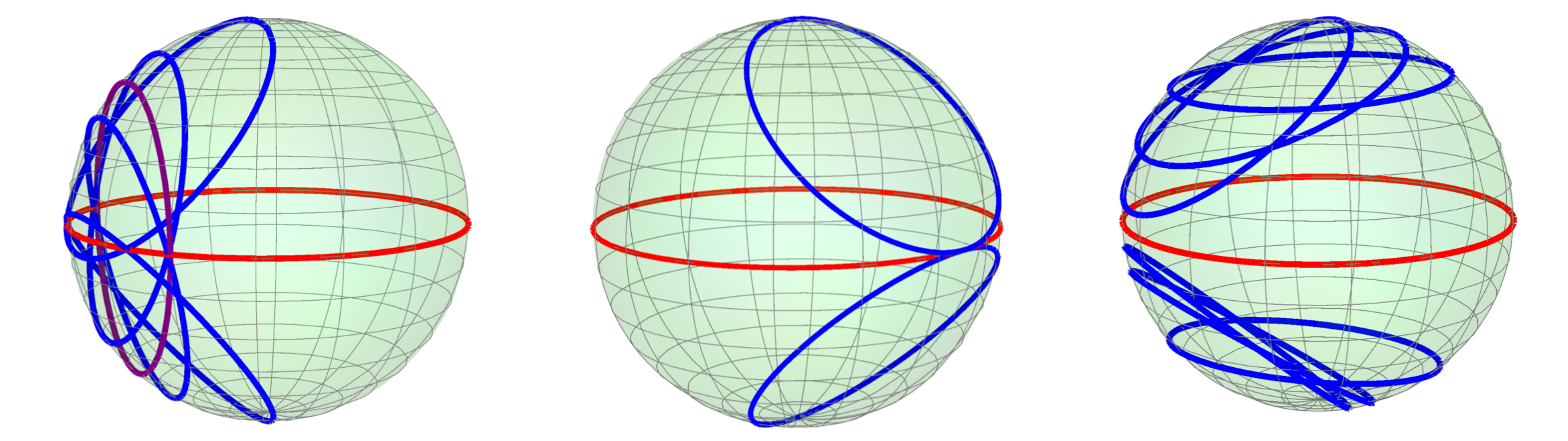}
		\caption{ 
			Small circles: $\mathcal K (z)= k_0 \, z + c, \ k_0 >0$; \newline $0\leq |c|<1$ (left), $c=\pm 1$ (center), $1<|c|<\sqrt{1+k_0^2}$ (right).}
		\label{SmallCircles}
	\end{center}
\end{figure}

For example, if $c=0$  we arrive (up to rotations around the $z$-axis) at $\lambda (s)=\arctan \left( \cos (\sqrt{1+k_0^2} \, s)/k_0\right) $ and finally 
$$ \xi (s)=\frac{1}{\sqrt{1+k_0^2}}\left(k_0,\cos \left(\sqrt{1+k_0^2}\, s\right),\sin \left(\sqrt{1+k_0^2}\, s\right)\right),  $$
that corresponds to the small circle $\s^2 \cap \{ x=k_0/\sqrt{1+k_0^2} \}$. This is the only spherical curve, up to rotations around the $z$-axis, with spherical angular momentum $\mathcal K (z)= k_0 z$
(see Figure~\ref{SmallCircles}). 
\end{example}
\begin{remark}\label{difficulties}
{\rm The main difficulties one can find carrying on the strategy described in Remark~\ref{c}  (or in Theorem~\ref{quadratures}) to determine a spherical curve whose curvature is $\kappa=\kappa(z)$ are the following:
\begin{enumerate}[\rm (1)]
\item The integration of $s=s(z)$:  Even in the case that $\mathcal K (z)$ were polynomial,
the corresponding integral is not necessarily elementary. For example, when $\mathcal K (z)$ is a quadratic
polynomial, it can be solved using Jacobian elliptic functions (see e.g.\ \cite{BF71}). This is
equivalent to $\kappa (z)$ be linear, i.e.\ $\kappa (z)= 2a z+ b$, $a,b \in \R$. We will study such
spherical curves in Section 3.
\item The previous integration gives us $s=s(z)$; it is not always possible to obtain explicitly $z=z(s)$, what is necessary to determine the curve.
\item Even knowing explicitly $z=z(s)$, the integration to get $\lambda(s)$ may be impossible to perform using elementary or known functions.
\end{enumerate}
}
\end{remark}
Nevertheless, along the paper we will study different families where we are successful with the procedure described in  Remark~\ref{c} and we will recover some known curves and find out new spherical curves characterized by their spherical angular momentum.


\section{Elastic-type curves on the sphere}
\subsection{A new characterization and a generalization of elastic curves}
A unit speed spherical curve $\xi$ is said to be an {\em elastica under tension $\sigma$} (see \cite{LS84}) if its curvature $\kappa $ satisfies the differential equation
\begin{equation}\label{eq_elastica}
2 \ddot \kappa + \kappa^3 + (2-\sigma) \kappa =0
\end{equation}
for some $\sigma \in \R$. They are the critical points of the elastic energy functional 
$$ \int_\xi (\kappa^2 + \sigma )ds $$ acting on spherical curves with suitable boundary conditions. If $\sigma =0$, then the constraint on arc length is removed and $\xi$ is called a {\em free elastica}. 

A possible generalization of free elasticae was considered in \cite{AGM06}, where the authors studied the elastic curves in $\s^2$ which are circular at rest. They are called {\em $\lambda$-elastic curves}. These curves are critical points of the functional 
$$ \int_\xi (\kappa + \lambda)^2 ds, \, \lambda \in \R$$ 
and are characterized by the Euler-Lagrange equation
\begin{equation}\label{eq_l-elastica}
2 \ddot \kappa + \kappa^3 + (2-\lambda^2) \kappa +2\lambda =0.
\end{equation}
It is obvious that the 0-elastic curves are the free elasticae.
The main result of this section deals with the spherical curves which are the critical points of the bending energy for variations with constant length (including both previous types of elastic curves) relating them with the case commented in part (1) of Remark~\ref{difficulties}.
\begin{theorem}\label{general elasticae}
Let $\xi $ be a spherical curve whose curvature $\kappa$ satisfies
\begin{equation}\label{condition1}
\kappa = 2a \langle \xi, {\bf e} \rangle +b , \ a\neq 0, \,  b\in \R,
\end{equation}
for some ${\bf e}\in \R^3, \ |{\bf e}|=1$. Then:
\begin{enumerate}[\rm (i)]
\item[\rm (i)] the spherical angular momentum $\mathcal K$ of $\xi $ is given by 
$$ \mathcal K =   a \langle \xi, {\bf e} \rangle ^2 + b \langle \xi, {\bf e} \rangle + c,  $$
for some $c \in \R$.
\item[\rm (ii)] $\xi $ is a critical point of the functional 
$$ \int_\xi (\kappa^2 -2b \kappa + b^2-4ac)\,ds $$ and so $\kappa $ satisfies the corresponding Euler-Lagrange equation
\begin{equation}\label{eq_generalelastica}
2 \ddot \kappa + \kappa^3 + \left(2-(b^2-4ac)\right) \kappa - 2b =0.
\end{equation}
\end{enumerate}
If $b=0$, $\xi $ is an elastica under tension $\sigma =-4ac$;
and if $c=0$, $\xi $ is a $\lambda$-elastic curve with $\lambda=-b$.

Conversely, if $\xi $ is a critical point of the functional 
$${\mathcal F}_\sigma^\lambda (\xi) :=  \int_\xi \left((\kappa + \lambda)^2 + \sigma \right) \,ds , \ \lambda, \sigma \in \R ,$$ 
then the curvature of $\xi $ can be written as in \eqref{condition1}.
\end{theorem}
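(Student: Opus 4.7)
The plan is to reduce part (i) to a one-line application of the machinery already developed. Taking, after a rotation around the $z$-axis, $\mathbf{e}=(0,0,1)$ and writing $z=\langle \xi,\mathbf{e}\rangle$, the hypothesis becomes $\kappa(z)=2az+b$. Equation \eqref{anti K} says $d\mathcal K=\kappa(z)\,dz$, so integrating immediately yields $\mathcal K(z)=az^2+bz+c$, with $c\in\R$ the constant of integration.

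For part (ii), I will verify the Euler–Lagrange equation \eqref{eq_generalelastica} by direct differentiation. Starting from $\kappa=2az+b$, two derivatives in $s$ give $\dot\kappa=2a\dot z=2a\langle T,\mathbf{e}\rangle$ and, using the Frenet equations \eqref{Frenetbis},
\begin{equation*}
\ddot\kappa=2a\langle\dot T,\mathbf{e}\rangle=2a\langle-\xi+\kappa N,\mathbf{e}\rangle=-2az-2a\kappa\mathcal K .
\end{equation*}
Substituting $\mathcal K=az^2+bz+c$ from (i) and eliminating $z$ via $2az=\kappa-b$ is a purely algebraic calculation; after multiplying through, the terms rearrange into exactly \eqref{eq_generalelastica}. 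That this ODE is the Euler–Lagrange equation of $\int(\kappa^2-2b\kappa+b^2-4ac)\,ds$ follows from the standard first variation formula on $\s^2$, namely $\ddot{f'(\kappa)}+(\kappa^2+1)f'(\kappa)-\kappa f(\kappa)=0$ applied to $f(\kappa)=\kappa^2-2b\kappa+b^2-4ac$. The two particular cases ($b=0$ giving \eqref{eq_elastica} with $\sigma=-4ac$, and $c=0$ giving \eqref{eq_l-elastica} with $\lambda=-b$) are immediate by matching coefficients.

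For the converse, let $\xi$ be a critical point of $\mathcal F_\sigma^\lambda$. The associated Euler–Lagrange equation is
\begin{equation*}
2\ddot\kappa+\kappa^3+(2-\lambda^2-\sigma)\kappa+2\lambda=0,
\end{equation*}
and multiplying by $\dot\kappa$ and integrating produces the first integral
\begin{equation*}
\dot\kappa^2+\tfrac{1}{4}\kappa^4+\tfrac{1}{2}(2-\lambda^2-\sigma)\kappa^2+2\lambda\kappa=E
\end{equation*}
for some constant $E$. On the other hand, the forward implication shows that any spherical curve with $\mathcal K(z)=az^2+bz+c$ has curvature $\kappa=2az+b$ and satisfies the same ODE with the same first integral, provided $b=-\lambda$, $-4ac=\sigma$, and $4a^2=E+\lambda^2+\tfrac14(\lambda^2+\sigma)^2$. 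These three equations uniquely determine the triple $(a,b,c)$ (up to the sign of $a$), so I will construct, via Theorem \ref{quadratures}, a model curve $\tilde\xi$ with these data. Because $\xi$ and $\tilde\xi$ have curvature functions $\kappa(s)$ satisfying the same second-order ODE with matching initial values of $(\kappa,\dot\kappa)$, and because a spherical curve is determined up to an isometry of $\s^2$ by its arc-length curvature, some isometry $\Phi$ carries $\tilde\xi$ onto $\xi$; taking $\mathbf{e}:=\Phi(\tilde{\mathbf{e}})$ yields $\kappa=2a\langle\xi,\mathbf{e}\rangle+b$ along $\xi$.

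The main obstacle is this last step of the converse: converting a first-integral identity into the geometric conclusion that $\kappa$ is affine in $\langle\xi,\mathbf{e}\rangle$ for some \emph{specific} axis $\mathbf{e}$. The argument rests on reading Theorem \ref{quadratures} as a rigidity statement (the spherical angular momentum $\mathcal K(z)$ determines the curve up to rotation about the $z$-axis), combined with the classical fact that $\kappa(s)$ determines a spherical curve up to ambient isometry. I also have to be careful that $4a^2$ comes out non-negative, which is automatic whenever the curve actually exists (the first integral is non-negative along the curve, forcing $E$ large enough).
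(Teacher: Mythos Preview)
Your argument is correct and follows essentially the paper's own proof: the same differentiation $\ddot\kappa=2a(-\langle\xi,\mathbf e\rangle-\kappa\mathcal K)$, the same substitution $\langle\xi,\mathbf e\rangle=(\kappa-b)/(2a)$, the same appeal to the general Euler--Lagrange formula $(\kappa^2+1)P'(\kappa)+\ddot{P'(\kappa)}=\kappa P(\kappa)$, and the same parameter-matching $b=-\lambda$, $\sigma=-4ac$, $16a^2=4E+4\lambda^2+(\lambda^2+\sigma)^2$ in the converse. Two small points: in part (i) you write ``after a rotation around the $z$-axis, $\mathbf e=(0,0,1)$'', but a rotation about the $z$-axis fixes $(0,0,1)$; you mean a general rotation of $\R^3$. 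And for the positivity of $4a^2$, your parenthetical (``the first integral is non-negative'') is imprecise; the paper makes this exact via the identity $4E+4\lambda^2+(\lambda^2+\sigma)^2=4\dot\kappa^2+4(\kappa+\lambda)^2+(\kappa^2-(\lambda^2+\sigma))^2>0$. Your converse is in fact more complete than the paper's: you explicitly build a model curve $\tilde\xi$ and transport $\mathbf e$ via the isometry coming from the fundamental theorem of spherical curves, whereas the paper stops once $a,b,c$ are found and leaves the construction of $\mathbf e$ implicit.
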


\begin{remark}
It is remarkable the similitude of condition \eqref{condition1} characterizing the generalized elasticae considered in Theorem~\ref{general elasticae} with the geometric property satisfied by the classical Euler elastic curves in the plane: their curvature is proportional to one of the coordinate functions, say $\kappa (x,y) = 2\lambda y + \mu$, $\lambda\neq 0, \, \mu\in \R$ (see Section 3 in \cite{CCI16} and Section 1 in \cite{S99}).
Even something similar happens to spacelike and timelike elastic curves in Lorentz-Minkowski plane (see \cite{CCIs18a} and \cite{CCIs18b}).
\end{remark}

\begin{proof}[Proof of Theorem \ref{general elasticae}]
From \eqref{spherical momentum}, \eqref{Frenetbis} and \eqref{condition1}, we get that $$ \frac{d}{ds} \left( -\mathcal K  + a \langle \xi, {\bf e} \rangle ^2 + b \langle \xi, {\bf e} \rangle \right) =0. $$ This proves part {\rm (i)}.

We also have from \eqref{condition1}, \eqref{Frenetbis} and \eqref{spherical momentum}, that $\dot \kappa = 2a \langle \dot \xi, {\bf e} \rangle$ and $\ddot \kappa = 2a (-\langle \xi, {\bf e} \rangle - \kappa\mathcal K )$. Now we can easily check that $\kappa$ given by \eqref{condition1} satisfies \eqref{eq_generalelastica} since,
after a straightforward computation using (i) and putting $\langle \xi, {\bf e} \rangle =(\kappa -b)/2a$, we arrive at \eqref{eq_generalelastica}.
We observe that if $b=0$ then $\xi $ satisfies \eqref{eq_elastica} with $\sigma=-4ac$ and if $c=0$ then $\xi $ satisfies \eqref{eq_l-elastica} with $\lambda=-b$.
Moreover, in \cite{AGM03} it is shown that for a given differentiable function $P(\kappa)$, the critical points of the functional $\int_\xi P(\kappa)ds$ are characterized by the Euler-Lagrange equation
\begin{equation}\label{EL}
(\kappa^2 +1) P'(\kappa) +\frac{d^2 (P'(\kappa))}{ds^2}=\kappa P(\kappa).
\end{equation}
It is an exercise to check that putting $P(\kappa)=\kappa^2-2b \kappa + b^2-4ac$ in \eqref{EL} we obtain \eqref{eq_generalelastica}. This finishes the proof of part {\rm (ii)}. 

Multiplying \eqref{eq_generalelastica} by $ \dot k$ we obtain a first integral 
$$ \dot \kappa ^2 + \kappa ^4 /4+ \left(1-(b^2-4ac)/2\right) \kappa ^2 -2 b \kappa = E, $$ where $E$ is a real constant.
After a long computation involving \eqref{condition1}, part {\rm (i)}, \eqref{spherical momentum} and using that $\langle \dot \xi, {\bf e} \rangle ^2 =1-\langle
\xi, {\bf e} \rangle ^2 - \langle N, {\bf e} \rangle ^2$, since $\{\xi,\dot \xi,N\}$ is an orthonormal basis in $\R^3$, we get that
\begin{equation}\label{E}
E=4a^2 -b^2-(b^2-4ac)^2/4.
\end{equation}

On the other hand, suppose now that $\xi $ is a critical point of ${\mathcal F}_\sigma^\lambda $. 
Taking into account \eqref{EL} we deduce that $\kappa$ verifies the differential equation
\begin{equation}\label{eq_generalelasticabis}
2 \ddot \kappa + \kappa^3 + (2-\lambda^2-\sigma) \kappa + 2 \lambda =0.
\end{equation}
Multiplying \eqref{eq_generalelasticabis} by $\dot \kappa$ and integration allow us to deduce the energy $E\in\R$ of $\xi$:
\begin{equation}\label{energy_generalelastica}
E:=  \dot \kappa ^2 + \frac{\kappa^4}{4} + \left(1-\frac{\lambda^2+\sigma}{2}\right) \kappa^2 +2\lambda \kappa .
\end{equation}
We want to prove that $\kappa$ has an expression like in \eqref{condition1}. For this purpose, we first observe that if $\lambda =0$ then $\kappa \equiv 0$ is a trivial solution to \eqref{eq_generalelasticabis}. For example, we can take ${\bf e}\in \R^3, \ |{\bf e}|=1$ the unit normal vector orthogonal to the vectorial plane containing the corresponding great circle in $\s^2$. Now we must look for $a\neq 0$ and $b\in \R$ satisfying \eqref{condition1}. Comparing \eqref{eq_generalelastica} and \eqref{eq_generalelasticabis}, we take $b=-\lambda$ and observe that $\sigma $ must satisfy $\sigma = -4ac$, with $a \neq 0$, $c\in \R$. Using \eqref{energy_generalelastica}, we have that 
$ 4E+4\lambda^2+(\lambda^2+\sigma)^2=4  \dot \kappa ^2+4(\kappa + \lambda)^2+ \left( \kappa ^2 - (\lambda ^2+\sigma)\right)^2 >0 $.

But looking at \eqref{E}, $E$ must satisfy $E=4a^2 -b^2-(b^2-4ac)^2/4$ and, eliminating $c$, we finally arrive at 
$ 0<4E+4\lambda^2+(\lambda^2+\sigma)^2=16 a^2$, which allows us to obtain the searched value for $a$.
\end{proof}
The study of (free) elastic curves on the sphere has been considered under different approaches (see for example \cite{AGM03}, \cite{BC94}, \cite{J95}, \cite{LS84} or \cite{S08}), paying special attention to the closed ones. The closed $\lambda$-elastic spherical curves were studied in \cite{AGM06}.
All the mentioned articles are based on the study of the differential equation for the geodesic curvature of the spherical curve, being sometimes integrated directly in terms of Jacobi elliptic functions. 

In our approach of Theorem \ref{general elasticae}, we can choose ${\bf e}=(0,0,1)$ without loss of generality and so  $\langle \xi, {\bf e} \rangle =z$. In this way we arrive at the conditions
\begin{equation}\label{condition1bis}
\kappa (z) = 2a z + b, \, \mathcal K (z) = a z^2 + b z +c, \, a\neq 0, b,c \in \R,
\end{equation}
in the notation of Theorem \ref{quadratures}. Thus, using suitable coordinates in the sphere, we can conclude the following uniqueness result for the spherical elastic curves considered in literature.

\begin{corollary}\label{unique elasticae}
\begin{enumerate}[\rm (a)] 
\item The elasticae under tension $\sigma $ are the only spherical curves (up to rotations around $z$-axis) with spherical angular momentum $\mathcal K (z) = a z^2 +c, \, a\neq 0, c\in \R$, with $\sigma =-4ac$. In particular, the free elasticae are characterized by the spherical angular momentum $\mathcal K (z) = a z^2, \, a\neq 0$.
\item The $\lambda$-elastic curves are the only spherical curves (up to rotations around $z$-axis) with spherical angular momentum $\mathcal K (z) = a z^2 +b z, \, a\neq 0, b\in \R$, with $\lambda =-b$.
\end{enumerate}
\end{corollary}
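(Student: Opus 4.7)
The plan is to derive the corollary as a direct combination of Theorems \ref{quadratures} and \ref{general elasticae}. The uniqueness clause---that a spherical curve with prescribed spherical angular momentum $\mathcal K(z)$ is determined up to rotations around the $z$-axis---is precisely Theorem \ref{quadratures}, so what remains is to match the two polynomial forms of $\mathcal K$ with the respective elastic notions.

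For the forward direction in (a), I would start from $\mathcal K(z)=az^2+c$ and differentiate via \eqref{anti K} to obtain $\kappa(z)=\mathcal K'(z)=2az$. Taking $\mathbf{e}=(0,0,1)$, this is exactly condition \eqref{condition1} with $b=0$, so Theorem \ref{general elasticae}(ii) tells us $\kappa$ satisfies \eqref{eq_elastica} with $\sigma=-4ac$; thus $\xi$ is an elastica under tension $\sigma$. For (b), $\mathcal K(z)=az^2+bz$ yields $\kappa=2az+b$, which is \eqref{condition1} with $c=0$, and Theorem \ref{general elasticae}(ii) identifies $\xi$ as a $\lambda$-elastic curve with $\lambda=-b$.

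For the converse, assume $\xi$ is a spherical elastica under tension $\sigma$ (respectively a $\lambda$-elastica). Then $\xi$ is a critical point of $\mathcal F_\sigma^0$ (respectively $\mathcal F_0^\lambda$). The converse part of Theorem \ref{general elasticae} supplies a unit vector $\mathbf{e}\in \R^3$ and constants $a\neq 0$, $b\in \R$ with $\kappa=2a\langle\xi,\mathbf{e}\rangle+b$, together with the identifications $b=-\lambda$ and $\sigma=-4ac$ extracted from the proof of that theorem. After an ambient rotation of $\s^2$ sending $\mathbf{e}$ to $(0,0,1)$, part (i) of Theorem \ref{general elasticae} gives $\mathcal K(z)=az^2+bz+c$. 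In the tension case $\lambda=0$ forces $b=0$, producing $\mathcal K=az^2+c$ with $\sigma=-4ac$; in the $\lambda$-elastic case $\sigma=0$ forces $c=0$ (since $a\neq 0$), producing $\mathcal K=az^2+bz$ with $\lambda=-b$.

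The main potential obstacle is the bookkeeping between the ambient $\mathrm{SO}(3)$-rotation used to align $\mathbf{e}$ with the north pole and the ``up to rotations around the $z$-axis'' phrasing in the statement: once coordinates are chosen so that $\mathbf{e}=(0,0,1)$, the residual freedom is exactly rotation about the $z$-axis, which matches the uniqueness supplied by Theorem \ref{quadratures}. Beyond this, the argument is a transparent application of the two preceding theorems and requires no new computation.
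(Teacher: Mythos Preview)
Your proposal is correct and follows essentially the same approach as the paper: the corollary is presented there as an immediate consequence of Theorem~\ref{general elasticae} after choosing coordinates so that ${\bf e}=(0,0,1)$, which yields \eqref{condition1bis}, with the uniqueness supplied by Theorem~\ref{quadratures}. Your write-up is in fact more explicit than the paper's brief lead-in, since you spell out both implications and the bookkeeping with the ambient rotation, but the underlying argument is the same.
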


\begin{remark}\label{rem:functional}
Inspired by the Langer and Singer work on the Kirchhoff elastic rod \cite{LS96},
in \cite{AGM04} it is  obtained by geometrical means the first integrals of the Euler-Lagrange equations of curvature
energy functionals $\int_\xi P(\kappa)ds$, where $P$ is a smooth function and $\kappa $ denotes the curvature of the spherical curve $\xi$ in $\s^3$.  Assuming that $P''(\kappa)\neq 0$, 
the critical points of the functional $\int_\xi P(\kappa)ds$ are characterized by a couple of differential equations
naturally related to a system of cylindrical coordinates in the three-sphere adapted to the curve $\xi$.

In the case that the curve $\xi $ lies in $\s^2$, from Section 3 of \cite{AGM04}, using concretely equation (33) with $\theta =0$ ($b=0$), then we have that $\cos^2 \psi = P'(k)^2 / a^2 $, $a\neq 0$, where
$\psi = \pi/2 -\varphi$ is the colatitude of $\xi$.

As a consequence, \textit{if a spherical curve $\xi$ is a critical point of $\int_\xi P(\kappa)ds$, then there exist geographical coordinates $(\varphi, \lambda)$ adapted to $\xi$ such that $$\sin \varphi = \delta \, P'(\kappa), \, \delta \neq 0.$$}
This result is consistent with Theorem \ref{general elasticae}, since if $P(\kappa)=(\kappa + \lambda)^2 + \sigma$, then
$\sin \varphi =z=\frac{\kappa -b}{2a}= \delta \,P'(\kappa)$, taking $\delta=\frac{1}{4a}$ and $\lambda =-b$.
\end{remark}

If one follows the strategy described in Remark \ref{c} 
 to determine the generalized elasticae satisfying \eqref{condition1bis}, it is necessary to perform the integral
\begin{equation}\label{pol}
s=s(z)=\int \frac{dz}{\sqrt{P(z)}}, \ P(z)=1-z^2-(az^2 + bz + c)^2.
\end{equation}
Since $P(z)$ is a fourth order polynomial, \eqref{pol} can be solved in terms of Jacobi elliptic functions once the nature and multiplicity of the roots of $P(z)$ are determined. After inverting $s=s(z)$ to get $z=z(s)$ we would arrive at the expressions of $\kappa=\kappa(s)=2a z(s)+b$ compatible with the ones given in \cite{AGM03}, \cite{BC94}, \cite{J95}, \cite{LS84} or \cite{S08} and \cite{AGM06}, at least when $b=0$ or $c=0$. 
Besides helices and circles, the borderline, orbitlike and wavelike elasticae appear and, in addition, a more general case according to the expressions of the geodesic curvature in terms of the arc parameter (see \cite{S08}).
In order to get the explicit expression of the generalized elasticae, we also have to compute
\begin{equation}
\label{long elas}
\lambda (s)=\int \! \frac{ a z(s)^2+b z(s) + c}{z(s)^2-1}\, ds,
\end{equation}
which, in general, leads to complicated elliptic integrals. In the next two sections we are going to illustrate the aforementioned computations in two interesting and attractive cases.

\subsection{Seiffert's spherical spirals}
The  Seiffert's spirals are defined as those spherical curves obtained when one moves along the surface of a sphere with constant speed, while maintaining a constant angular velocity with respect to a fixed diameter (cf.\ \cite{E00}). These curves are given in cylindrical coordinates $(r,\theta,z)$, $r^2+z^2=1$, by the parametric equations
\begin{equation}
\label{Seiff}
r=\sn(s,p), \  \theta =p\, s, \  z=\cn(s,p), \ (p>0),
\end{equation}
where $p$ is a positive constant and $\sn$  and $\cn$  are the elementary Jacobi elliptic functions (cf.\ \cite{BF71} for instance). Erd\"{o}s provided in \cite{E00} a derivation of the equations of this curve, as well as an analysis of its properties, including conditions for obtaining periodic orbits. When $p>1$, the spiral is located entirely in the northern hemisphere.

Now we prove that these curves are elastic curves with positive tension corresponding to the conditions $\kappa (z)=2az$, $\mathcal K (z)=a z^2-a$, i.e.\ $b=0$, $a+c=0$ in \eqref{condition1bis}. Then $\sigma = 4a^2>0$ and there is no restriction if we consider $a>0$. So \eqref{pol} can be written as
$$ \left( \frac{dz}{ds} \right)^2 = (1-z^2)\left(1-a^2+a^2 z^2 \right), \ \frac{a^2-1}{a^2}<z^2<1, $$
which implies that $z(s)=\cn (s,a)$ (cf.\ \cite{BF71} for instance) and thus $r(s)=\sn (s,a)$. Using that $a+c=0$ in \eqref{long elas} we get that $\lambda (s)=a\,s$ and so we arrive at the Seiffert's spirals (see Figure~\ref{Seiff spirals}). As a summary:
\begin{corollary}
\label{Seiffert}
The Seiffert's spirals \eqref{Seiff} are the only spherical curves (up to rotations around $z$-axis) with spherical angular momentum $\mathcal K (z) = p z^2 -p, \, p>0$.
\end{corollary}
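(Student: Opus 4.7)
The plan is to run the three-step algorithm of Remark~\ref{c} on the prescribed spherical angular momentum $\mathcal K(z) = pz^2 - p = -p(1-z^2)$ and verify that the resulting unit-speed curve coincides with the Seiffert parametrization~\eqref{Seiff}; uniqueness up to rotations around the $z$-axis then follows at once from Theorem~\ref{quadratures}.

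First I compute $\mathcal K(z)^2 = p^2(1-z^2)^2$, so that
\[
1 - z^2 - \mathcal K(z)^2 = (1-z^2)\bigl(1 - p^2 + p^2 z^2\bigr).
\]
Step (ii) of the algorithm then yields the separable equation
\[
\left(\frac{dz}{ds}\right)^2 = (1-z^2)(1 - p^2 + p^2 z^2),
\]
which is exactly the standard ODE satisfied by the Jacobi elliptic function $\cn(\cdot\,, p)$. After absorbing the arc-length translation into the initial condition, I take $z(s) = \cn(s, p)$, whence $\cos\varphi(s) = \sqrt{1 - z(s)^2} = \sn(s, p)$.

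Step (iii) is where the specific form of $\mathcal K$ produces a decisive cancellation: since $\mathcal K(z) = p(z^2 - 1)$, the integrand of the longitude integral collapses to the constant $p$, giving
\[
\lambda(s) = \int \frac{\mathcal K(z(s))}{z(s)^2 - 1}\,ds = \int p\,ds = p\,s,
\]
with the integration constant absorbed by the permitted rotation around the $z$-axis. Reading off $\xi = (\cos\varphi\cos\lambda, \cos\varphi\sin\lambda, \sin\varphi)$ in cylindrical coordinates $(r,\theta,z) = (\cos\varphi,\lambda,\sin\varphi)$ then reproduces \eqref{Seiff} verbatim.

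There is no genuine obstacle: the whole argument hinges on the observation that $\mathcal K(z)$ is an exact constant multiple of $z^2 - 1$, which simultaneously turns step (ii) into the textbook Jacobi ODE for $\cn$ and trivializes step (iii). The only calculation worth double-checking is the identification of the differential equation for $z(s)$ with the defining ODE for $\cn(s, p)$, which is routine.
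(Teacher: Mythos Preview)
Your proof is correct and follows essentially the same route as the paper: both derive the ODE $(dz/ds)^2=(1-z^2)(1-p^2+p^2z^2)$ and identify $z(s)=\cn(s,p)$, then exploit the factor $z^2-1$ in $\mathcal K(z)$ to reduce the longitude integral to $\lambda(s)=p\,s$, with uniqueness coming from Theorem~\ref{quadratures}. The only cosmetic point is that $\sqrt{1-z(s)^2}=|\sn(s,p)|$ rather than $\sn(s,p)$ in general, but the paper makes the same identification and it is harmless on the relevant arc.
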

\begin{figure}[h!]
\begin{center}
\includegraphics[height=6cm]{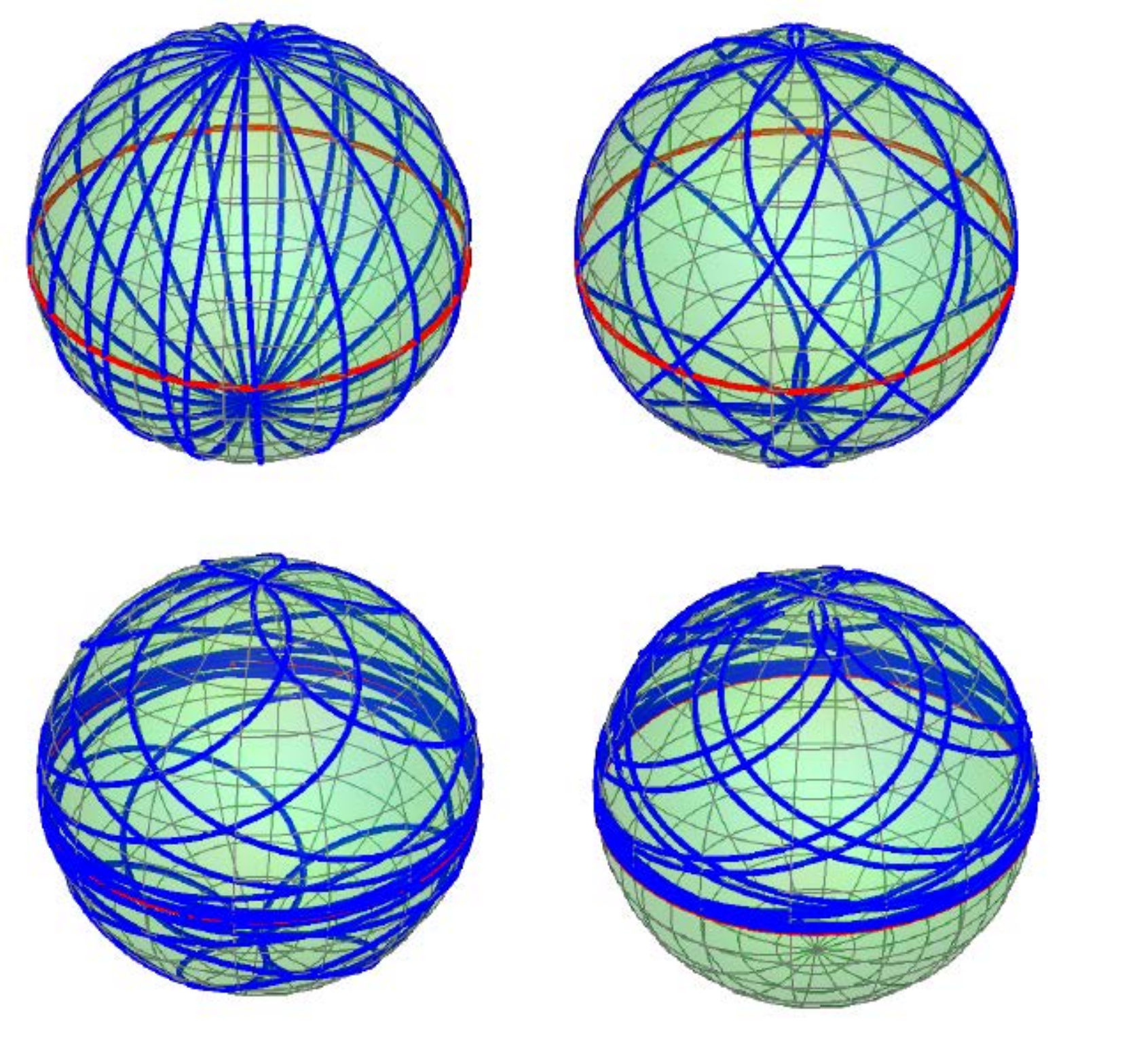}
\caption{Seiffert's spirals ($p\approx 0$, $0<p<1$, $p\approx 1$, $p>1$): $\mathcal K (z)= p z^2 -p$, $p>0$.}
\label{Seiff spirals}
\end{center}
\end{figure}

\subsection{Borderline spherical elastic curves}
We study in this section elastic curves with null energy producing elasticae under positive tension $\sigma>0$. This corresponds to the conditions $\kappa (z)=2az $, $\mathcal K (z)=a z^2-1$, i.e.\ $b=0$, $c=-1$ in \eqref{condition1bis}. Then $E=0$ according to \eqref{E} and we can take $a>0$ in order to $\sigma=4a>0$. Now \eqref{pol} leads to
$$ s=s(z)=\int \frac{dz}{z\,\sqrt{2a-1-a^2 z^2}}, \ \, 2a-1-a^2 z^2>0, $$
which implies that $a>1/2$. The above integral becomes elementary and, after inverting $s=s(z)$ and up to a translation on the parameter $s$, we get
\begin{equation}
\label{border1}
z(s)=\frac{2a-1}{a}\sech (\sqrt{2a-1}\,s).
\end{equation}
Looking at \eqref{long elas}, we get that if $a=1$ then $\lambda(s)=s$ (and $z(s)=\sech s$), and when $a>1/2$ with $a\neq 1$ we obtain 
\begin{equation}
\label{border2}
 \lambda(s)=s+ \arctan \left( \frac{\sqrt{2a-1}}{1-a}\tanh  (\sqrt{2a-1}s) \right).
\end{equation}
This family corresponds to the ``borderline elasticae'' described in \cite{S08} which are asymptotic to the equator. We show some pictures of them in Figure \ref{Borderline pics}.
In conclusion, we deduce the following uniqueness result:
\begin{corollary}
\label{borderline}
The borderline elasticae given by \eqref{border1} and \eqref{border2} are the only spherical curves (up to rotations around $z$-axis) with spherical angular momentum $\mathcal K (z) = a z^2 -1, \, a>1/2$.
\end{corollary}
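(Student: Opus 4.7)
The plan is to deduce this corollary as a direct application of Theorem~\ref{quadratures} combined with the explicit computation of the three quadratures described in Remark~\ref{c}. The uniqueness modulo rotations around the $z$-axis is automatic from Theorem~\ref{quadratures} once $\mathcal K(z)=az^2-1$ is prescribed; what remains is to check that running the algorithm on this particular $\mathcal K$ produces exactly the curve \eqref{border1}--\eqref{border2}.

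First I would identify the curvature. Since $\kappa(z)=\mathcal K'(z)=2az$, this fits \eqref{condition1bis} with $b=0$ and $c=-1$. By Theorem~\ref{general elasticae}, the curve is an elastica under tension $\sigma=-4ac=4a$, which is positive provided $a>0$. Substituting $b=0$ and $c=-1$ in \eqref{E} gives $E=4a^2-(4a)^2/4=0$, so we are indeed in the null-energy (borderline) regime of \cite{S08}.

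Next I would execute step (ii) of Remark~\ref{c}. The radicand in \eqref{pol} factors as
\[
1-z^2-(az^2-1)^2 = z^2\bigl(2a-1-a^2z^2\bigr),
\]
so the arc-length integral reduces to $s=\int \frac{dz}{z\sqrt{2a-1-a^2z^2}}$. Positivity of the radicand on a nonempty interval forces $a>1/2$, which is the hypothesis of the statement; the integral is elementary (a hyperbolic substitution), and after inversion and an irrelevant translation of $s$ one arrives at $z(s)=\frac{2a-1}{a}\sech(\sqrt{2a-1}\,s)$, recovering \eqref{border1}.

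Finally I would tackle step (iii): $\lambda(s)=\int \frac{az(s)^2-1}{z(s)^2-1}\,ds$. Writing the integrand as $1+\frac{(a-1)z(s)^2}{z(s)^2-1}$ isolates a term that integrates to $s$ plus a rational-in-$\sech^2$ piece whose antiderivative is the $\arctan$ in \eqref{border2}; the coefficient $\sqrt{2a-1}/(1-a)$ shows that $a=1$ is a degenerate value where the second term disappears and one obtains $\lambda(s)=s$ with $z(s)=\sech s$. This last integration is the main obstacle, since it is the only step where the algebra is genuinely delicate and where the separate case $a=1$ must be identified; everything else is a direct reading of Theorem~\ref{quadratures} and Theorem~\ref{general elasticae}.
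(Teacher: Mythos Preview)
Your proposal is correct and follows essentially the same route as the paper: the discussion preceding Corollary~\ref{borderline} in Section~3.3 carries out exactly the three quadratures of Remark~\ref{c} for $\mathcal K(z)=az^2-1$, factoring $1-z^2-(az^2-1)^2=z^2(2a-1-a^2z^2)$ to force $a>1/2$ and obtain \eqref{border1}, then integrating \eqref{long elas} (with the case $a=1$ singled out) to reach \eqref{border2}, with uniqueness coming from Theorem~\ref{quadratures}. Your additional remarks on $E=0$ via \eqref{E} and the decomposition of the $\lambda$-integrand are consistent with the paper's identification of these curves as null-energy borderline elasticae.
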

We remark that when $a=1$ we recover the Seiffert's spiral corresponding to $p=1$, since $\cn(s,1)=\sech s$ (see \cite{BF71}).

\begin{figure}[h!]
\begin{center}
\includegraphics[height=3cm]{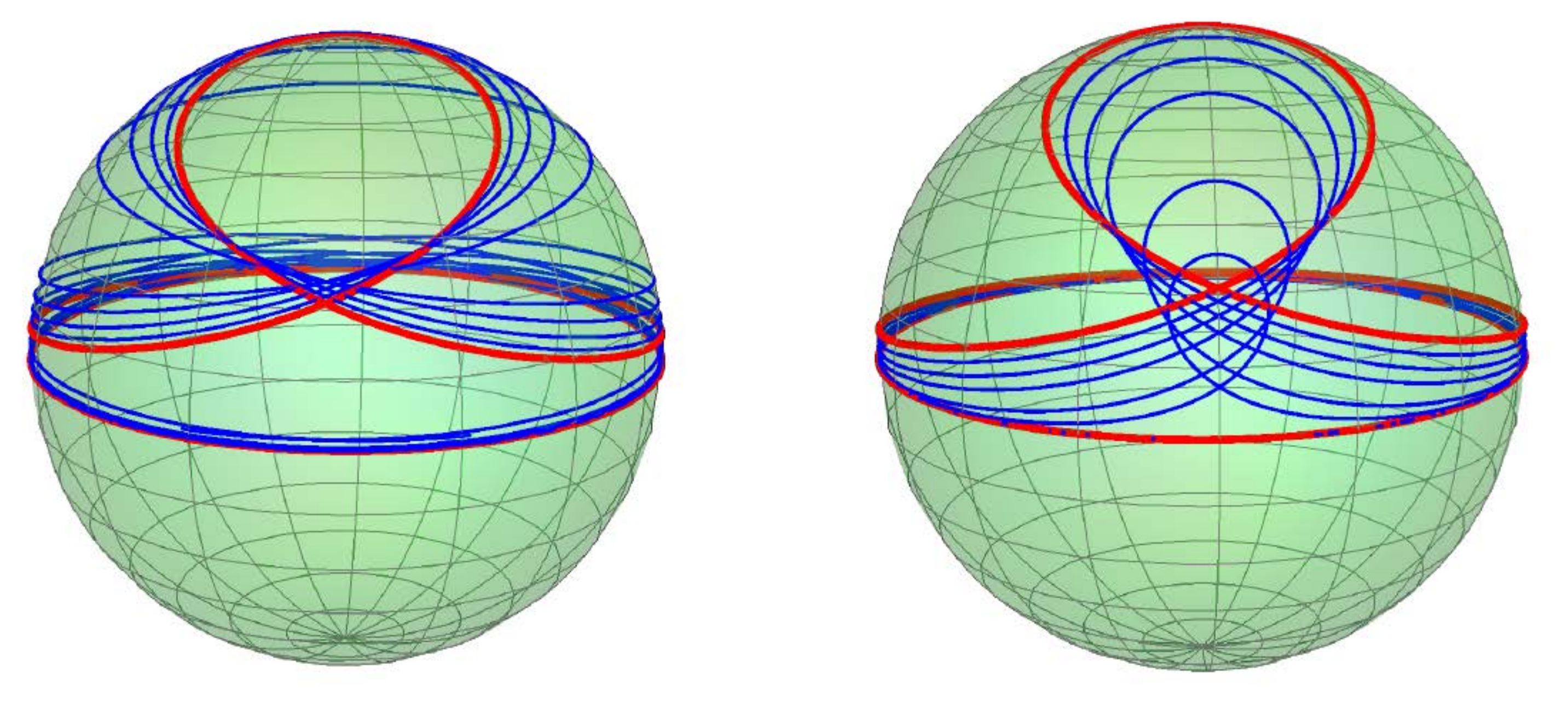}
\caption{Borderline elastic curves: $\mathcal K (z) = a z^2 -1, \, a>1/2$ (left: $1/2<a \leq 1$; right: $a\geq 1$).}
\label{Borderline pics}
\end{center}
\end{figure}

\section{Loxodromic-type curves on the sphere}

In this section we are interested in critical points of the functional
$$ \mathcal F^\mu (\xi):=\int_\xi \sqrt{\kappa^2+\mu^2}\, ds, \, \mu >0. $$
This functional was considered in Section 6 of \cite{AGM03} acting on the space of immersed closed curves in $\s^2$, motivated by total $\R^3$-curvature type functionals. We aim to connect the critical points of $\mathcal F^\mu$ with the classical loxodromes curves in $\s^2$ and others spherical curves with similar characteristics.

Taking into account Remark \ref{rem:functional}, we are devoted to study the spherical curves with curvature 
\begin{equation}
\label{k mu delta}
\kappa (z) = \frac{\mu \, z}{\sqrt{\delta^2 -z^2}}, \ \mu >0, \delta \neq 0. 
\end{equation}
We will distinguish cases according to $0 <\mu <1 $, $\mu = 1$ and $\mu >1$.

\subsection{Case $0 <\mu <1 $: spherical loxodromes}
The loxodromes are interesting curves in the sphere studied, among others, by Pedro Nunes in 1537, Simon Stevin in 1608 or Maupertuis in 1744. They are also known as rhumb lines because they make a constant angle $\alpha \in (0,\pi/2)$ with the meridians (cf.\ \cite{F93}). Analytically, using geographical coordinates $(\varphi, \lambda)$, they are defined by the equation 
\begin{equation}\label{eq_loxodrome}
d\lambda = \cot \alpha \frac{d\varphi}{\cos \varphi}.
\end{equation}
The aim of this section is the study of the spherical curves satisfying
\begin{equation}\label{k_loxodrome bis}
\kappa(\varphi)= a \tan \varphi, \ 0<a<1,
\end{equation}
or, equivalently,
\begin{equation}\label{k_loxodrome}
\kappa(z)=\frac{a z }{\sqrt{1-z^2}}, \ 0<a<1.
\end{equation}
So they correspond in \eqref{k mu delta} to the election $\mu=a\in (0,1)$ and $\delta =1$.

The trivial solution of \eqref{k_loxodrome} is given by the equator $z=0$. We follow the method described in Theorem \ref{quadratures} and Remark \ref{c}, considering the spherical angular momentum
$$ \mathcal K (z)=-a\sqrt{1-z^2}, \ 0<a<1. $$
Then we have:
$$ s\!=\!\int \frac{dz}{\sqrt{1-z^2-(-a\sqrt{1-z^2})^2}} 
\!=\!\int \frac{dz}{\sqrt{(1-a^2)(1-z^2)}}\!=\!\frac{1}{\sqrt{1-a^2}}\arcsin z, $$
and so $z(s)=\sin(\sqrt{1-a^2} \,s) $ and therefore
\begin{equation}
\label{lat lox}
\varphi(s)=\sqrt{1-a^2}\, s.
\end{equation}
On the other hand, from \eqref{lat lox}, we get:
\begin{equation}
\label{long lox}
\lambda (s)\!=\!\int \frac{a \,ds}{\cos \varphi (s)}\!=\! \frac{a}{\sqrt{1\!-\!a^2}}\log \left( \sec (\sqrt{1\!-\!a^2}\, s)+\tan (\sqrt{1\!-\!a^2}\, s)\right),
\end{equation}
where $|s|<\dfrac{\pi}{2\sqrt{1-a^2}}$.

We deduce from \eqref{lat lox} and \eqref{long lox} that $$d\lambda = \frac{a}{\sqrt{1-a^2}}\, \frac{d\varphi}{\cos \varphi}$$ and, taking into account \eqref{eq_loxodrome} and that $\mathcal K (\varphi) =-a \cos \varphi$,
we conclude the following characterization of the loxodromes.
\begin{corollary}
The loxodromes $d\lambda = \cot \alpha\, d\varphi / \cos \varphi$ are the only spherical curves (up to rotations around $z$-axis) with spherical angular momentum $\mathcal K(\varphi)=-\cos \alpha \cos\varphi $, $\alpha \in (0,\pi/2)$.
\end{corollary}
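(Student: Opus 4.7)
The plan is to apply Theorem~\ref{quadratures} with the identification $a := \cos\alpha$, so that the hypothesis $\mathcal K(\varphi) = -\cos\alpha\cos\varphi$ translates, via $z = \sin\varphi$, into the spherical angular momentum $\mathcal K(z) = -a\sqrt{1-z^2}$ with $a\in(0,1)$. All of the computational work needed has already been carried out in the paragraphs preceding the corollary: they produce the explicit unit-speed parametrization \eqref{lat lox}--\eqref{long lox} of the curve corresponding to this $\mathcal K(z)$, and by the uniqueness part of Theorem~\ref{quadratures} such a curve is determined by $\mathcal K$ up to a rotation about the $z$-axis.

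To conclude that the resulting curve is a loxodrome making angle $\alpha$ with the meridians, I would eliminate the arc length from \eqref{lat lox}--\eqref{long lox} (equivalently, divide \eqref{long} by \eqref{dif}) to obtain the intrinsic relation
\begin{equation*}
\frac{d\lambda}{d\varphi} = -\frac{\mathcal K(\varphi)}{\cos\varphi\sqrt{\cos^2\varphi - \mathcal K(\varphi)^2}} = \frac{a}{\sqrt{1-a^2}\,\cos\varphi} = \frac{\cot\alpha}{\cos\varphi},
\end{equation*}
which is exactly the loxodrome equation \eqref{eq_loxodrome}.

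For the converse direction, i.e.\ to verify that every loxodrome has spherical angular momentum $\mathcal K(\varphi) = -\cos\alpha\cos\varphi$, I would substitute $\dot\lambda = \cot\alpha\,\dot\varphi/\cos\varphi$ from \eqref{eq_loxodrome} into the unit-speed constraint $\dot\varphi^2 + \dot\lambda^2\cos^2\varphi = 1$ to obtain $\dot\varphi^2 = \sin^2\alpha$, and then plug into $\mathcal K = -\dot\lambda\cos^2\varphi$ (see \eqref{K momentum}) to recover $\mathcal K = -\cos\alpha\cos\varphi$ after fixing the orientation. The computation is elementary; the only delicate points are the sign/orientation convention in \eqref{spherical momentum}--\eqref{K momentum} and checking that $\alpha\in(0,\pi/2)$ (equivalently $a\in(0,1)$) singles out the proper loxodromes, excluding the meridians (the limit $\alpha\to 0$) and the equator (the limit $\alpha\to \pi/2$).
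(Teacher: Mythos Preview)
Your proposal is correct and follows essentially the same approach as the paper: you invoke the parametrization \eqref{lat lox}--\eqref{long lox} already derived from Theorem~\ref{quadratures}, eliminate $s$ to obtain the loxodrome relation $d\lambda=\cot\alpha\,d\varphi/\cos\varphi$ with $a=\cos\alpha$, and appeal to the uniqueness clause of Theorem~\ref{quadratures}. The only minor differences are that you compute $d\lambda/d\varphi$ directly from \eqref{dif}--\eqref{long} rather than via the explicit expressions \eqref{lat lox}--\eqref{long lox}, and you add an explicit converse verification; the latter is harmless but not strictly needed, since the uniqueness in Theorem~\ref{quadratures} already forces any curve with this $\mathcal K$ to coincide (up to a $z$-axis rotation) with the loxodrome you have exhibited.
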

From \eqref{k_loxodrome bis} and \eqref{lat lox}, we arrive at the intrinsic equation of the loxodromes 
(see Figure \ref{loxodromes}), given by
$$
\kappa (s)= \cos \alpha \, \tan (\sin \alpha \, s), \, \alpha \in (0,\pi/2).
$$
\begin{figure}[h!]
\begin{center}
\includegraphics[height=3cm]{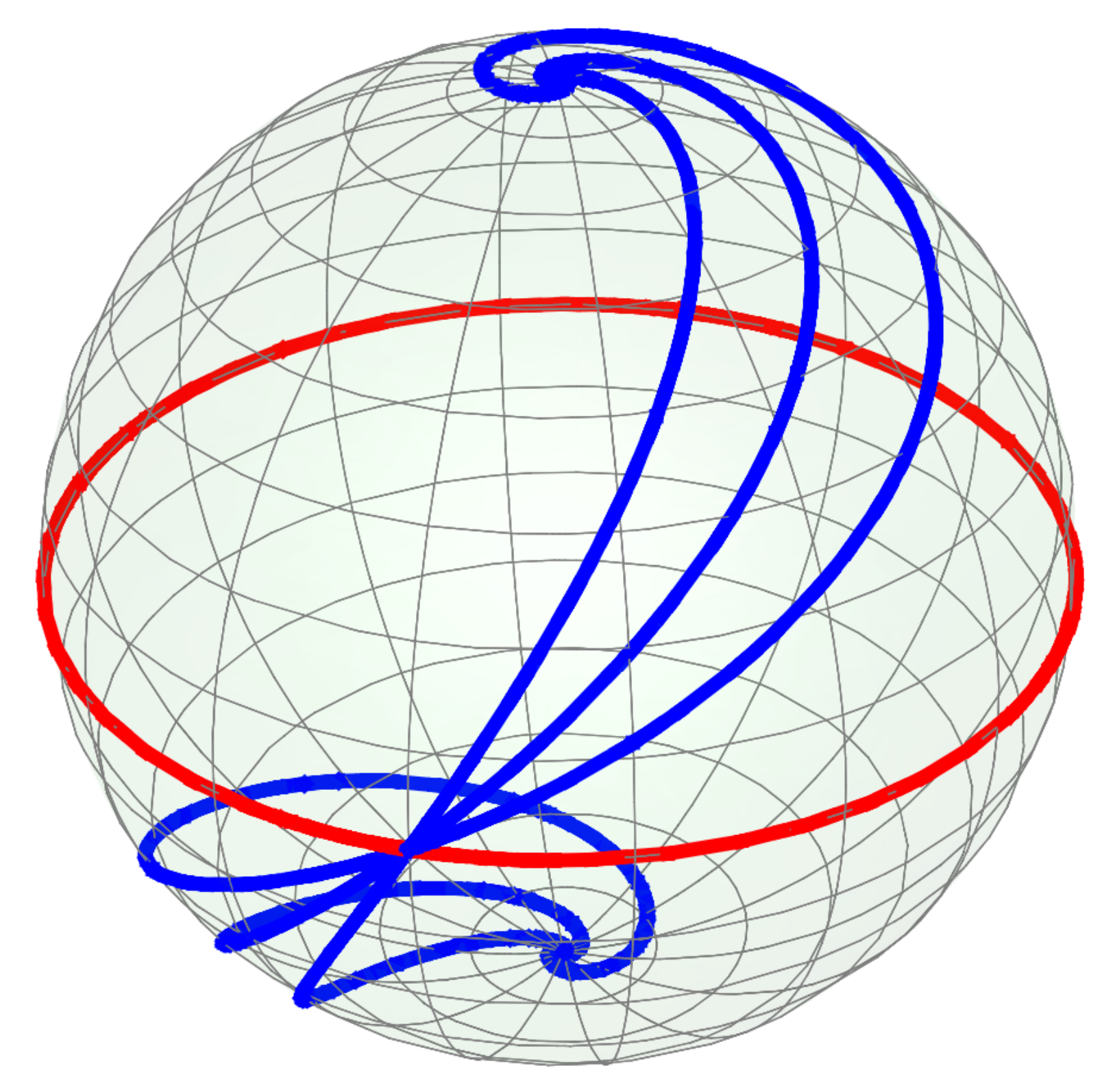}
\caption{Loxodromes: $\mathcal K(\varphi)=-\cos \alpha \cos\varphi $, $\alpha \in (0,\pi/2)$.}
\label{loxodromes}
\end{center}
\end{figure}

\subsection{Case $\mu =1$}
We now want to determine the spherical curves whose curvature is given by
\begin{equation}
\label{explicit1}
\kappa(z)=\frac{z}{\sqrt{a-z^2}}, \ 0<a<1.
\end{equation}
Looking at \eqref{k mu delta}, we are now considering $\mu=1$ and $\delta =\sqrt a$.

The trivial solution is given by the equator $z=0$.
We follow the method described in Theorem~\ref{quadratures} or Remark \ref{c} for the non trivial case, considering the spherical angular momentum $$\mathcal K (z)=-\sqrt{a-z^2}.$$ In this way, we get:
$$
s=s(z)=\int \frac{dz}{\sqrt{1-z^2-(-\sqrt{a-z^2})^2}}=\int \frac{dz}{\sqrt{1-a}}=\frac{z}{\sqrt{1-a}},
$$
and thus $z(s)=\sqrt{1-a} \, s$.

We write $a=\sin^2 \alpha$ , $0<\alpha <\pi/2$, and abbreviate
$ c_\alpha=\cos \alpha, s_\alpha =\sin \alpha$. Hence we have:
$$
\lambda (s)=\int \frac{\sqrt{s_\alpha^2-c_\alpha^2 s^2}}{1-c_\alpha^2 s^2}\, ds,
$$
which gives
\begin{equation*}
\begin{array}{c}
\lambda (s)=\dfrac{1}{c_\alpha} \arctan \left( \dfrac{c_\alpha s}{\sqrt{s_\alpha^2-c_\alpha^2 s^2}} \right) \\
 -\dfrac{1}{2} \arctan \left( \dfrac{c_\alpha s+s_\alpha^2}{c_\alpha\sqrt{s_\alpha^2-c_\alpha^2 s^2}} \right)
-\dfrac{1}{2} \arctan \left( \dfrac{c_\alpha s-s_\alpha^2}{c_\alpha\sqrt{s_\alpha^2-c_\alpha^2 s^2}} \right),
\end{array}
\end{equation*}
where $|s|<\tan \alpha$.

We observe that $|z(s)|<\sin \alpha$. Using \eqref{explicit1} and $z(s)=c_\alpha s$, we get the intrinsic
equation
\[
\kappa (s)= \dfrac{c_\alpha s}{\sqrt{s_\alpha^2-c_\alpha^2 s^2}}, \ |s|<\tan \alpha,
\]
of this family of spherical curves of loxodromic-type (see Figure \ref{zSqrtaminusz2}) characterized by the geometric angular momentum $\mathcal K (z)=-\sqrt{\sin^2\alpha-z^2}$, $0<\alpha <\pi/2$.
\begin{figure}[h!]
\begin{center}
\includegraphics[height=3cm]{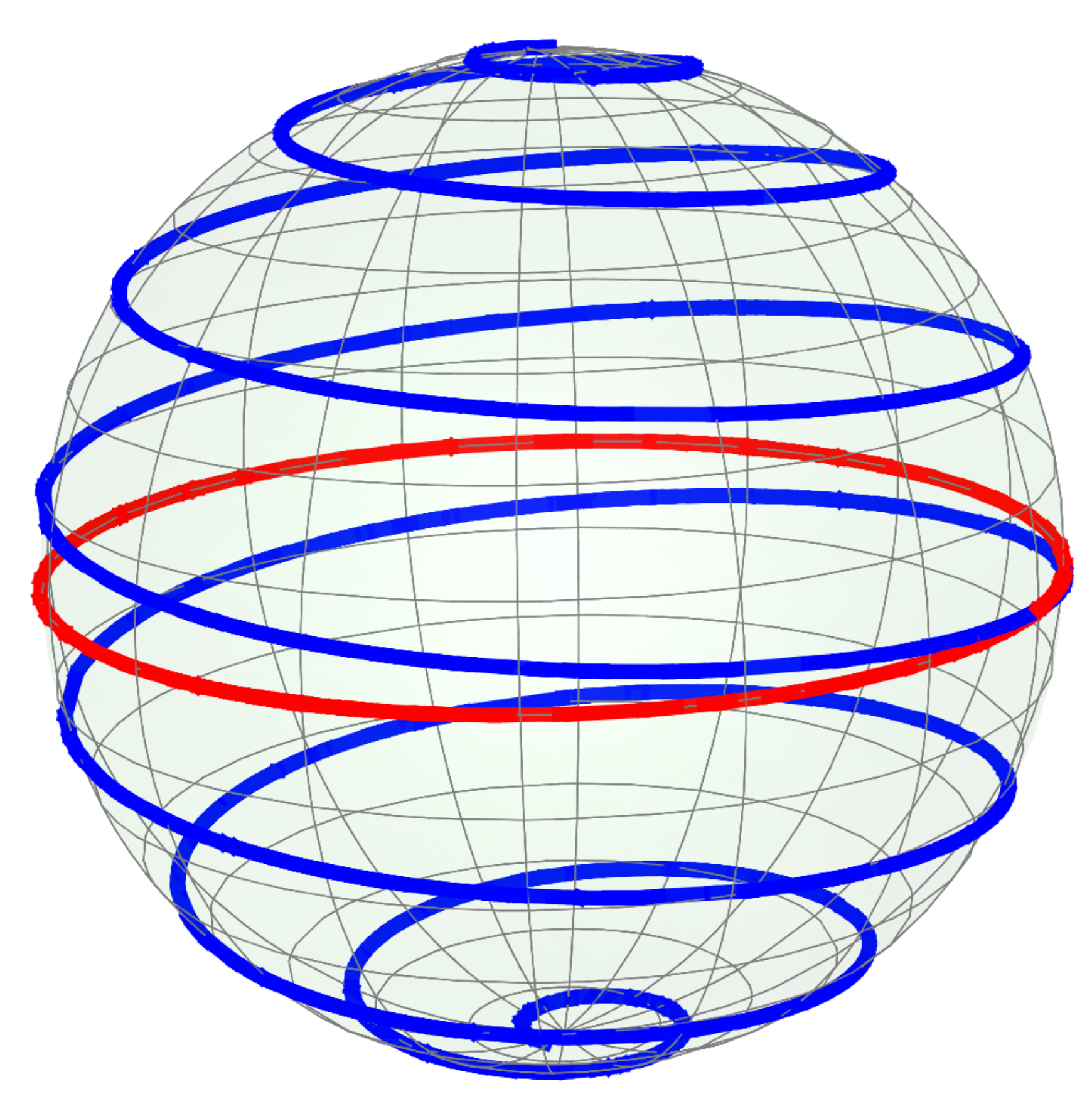}
\caption{Spherical curve with $\mathcal K(z)=-\sqrt{a-z^2}$, $0<a<1$.}
\label{zSqrtaminusz2}
\end{center}
\end{figure}

\subsection{Case $\mu >1$}
Finally we wish to study the spherical curves whose curvature is given by
\begin{equation}
\label{explicit2}
\kappa(z)=\frac{a z}{\sqrt{1-a z^2}}, \, a>1.
\end{equation}
We observe that corresponds in \eqref{k mu delta} with the elections  $\mu=\sqrt a$ and $\delta =1/\sqrt a$.
The trivial solution is given by the equator $z=0$. Otherwise, we make use of the method proposed in Theorem~\ref{quadratures} or Remark \ref{c}, considering the spherical angular momentum $$\mathcal K (z)=-\sqrt{1-a z^2}.$$
In this way, we obtain:
$$
s=s(z)=\int \frac{dz}{\sqrt{1-z^2-(-\sqrt{1-a z^2})^2}}=\int \frac{dz}{\sqrt{a-1}\,|z|}=\frac{\log |z|}{\sqrt{a-1}},
$$
and so $|z(s)|=e^{\sqrt{a-1} \, s}>0$. 

If we write $a=\cosh^2 \delta$, $\delta >0$, then $z(s)=\pm e^{\sinh \delta \, s}$ and
$$
\begin{array}{c}
\lambda (s)= \int \frac{\sqrt{1-\cosh^2 \delta \, e^{2 \sinh \delta \, s}}}{1-e^{2\sinh \delta \, s}}ds= 
\\ \\
\!=\! -\dfrac{\arctanh \left(\sqrt{1\!-\!\cosh^2 \delta \, e^{2 \sinh \delta \, s}}\right)}{\sinh \delta} \!+\! \arctan \left(\dfrac{\sqrt{1\!-\!\cosh^2 \delta \, e^{2 \sinh \delta \, s}}}{\sinh \delta} \right),
\end{array}
$$
where $s<-\log \cosh \delta / \sinh \delta $.

Using \eqref{explicit2}, we get the intrinsic
equation
$$
|\kappa (s)|= 
\dfrac{\cosh^2 \delta \, e^{ \sinh \delta \, s} }{\sqrt{1-\cosh^2 \delta \, e^{2 \sinh \delta \, s}}},
\ s<-\log \cosh \delta / \sinh \delta,
$$
of this family of spherical curves of loxodromic-type (see Figure \ref{zSqrtaz2plus1}) characterized by the geometric angular momentum $\mathcal K (z)=-\sqrt{1-\cosh ^2 \delta z^2}$.
\begin{figure}[h!]
\begin{center}
\includegraphics[height=2.9cm]{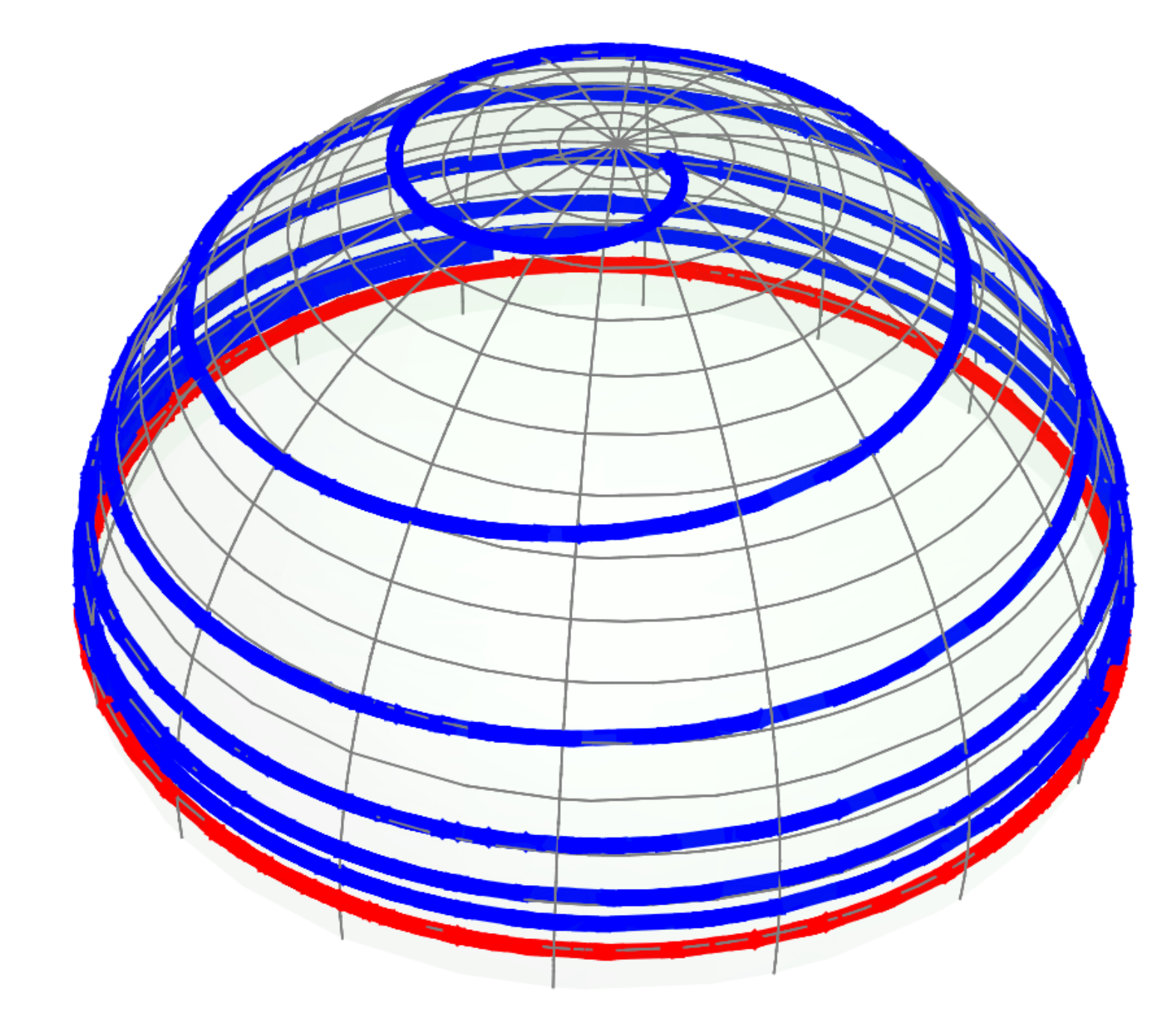}
\caption{Spherical curve with $\mathcal K(z)=-\sqrt{1-a z^2}$, $a>1$.}
\label{zSqrtaz2plus1}
\end{center}
\end{figure}

\section{Spherical catenaries}
In this section we are interested in critical points of the functional
$$ \mathcal F (\xi):=\int_\xi \sqrt{\kappa} \, ds. $$
The above functional $\int_\xi \kappa^{1/2}\, ds$ was considered in Section 5 of \cite{AGM03} acting on the space of convex ($\kappa >0$) closed curves in $\s^2$, motivated by the study of ($r=1/2$)-generalized elastic curves in $\s^2$. We aim to connect the critical points of $\mathcal F$ with the classical catenaries curves in $\s^2$. 

Taking into account Remark \ref{rem:functional}, we are devoted to study the spherical curves with curvature 
\begin{equation}
\label{k mu delta cate}
\kappa (z) = \frac{\delta^2}{4z^2}, \,  \delta \neq 0. 
\end{equation}

The spherical catenaries are the equilibrium lines of an inelastic flexible homogeneous infinitely thin massive wire included in a sphere, placed in a uniform gravitational field. Like any catenaries, their centres of gravity have the minimal altitude among all the curves with given length passing by two given points. They were studied by Bobillier in 1829 and by Gudermann in 1846 (cf.\ \cite{F93}). See Figure \ref{catenaries}.
\begin{figure}[h!]
\begin{center}
\includegraphics[height=3cm]{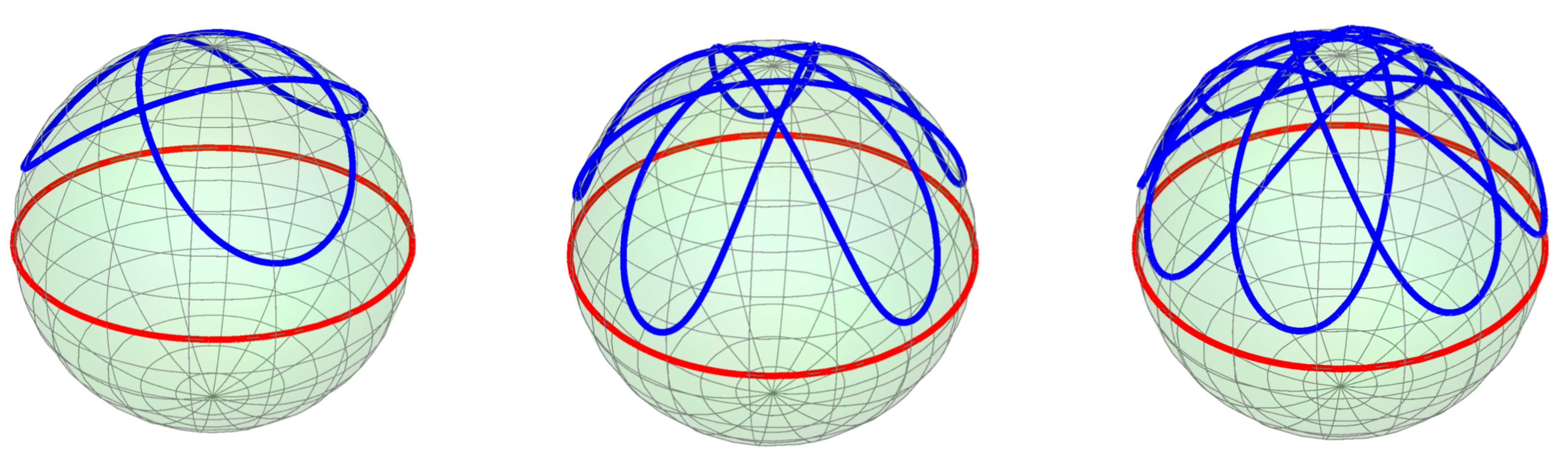}
\caption{Closed spherical catenaries.}
\label{catenaries}
\end{center}
\end{figure}

Using cylindrical coordinates $(r,\theta,z)$ in $\R^3$, they can be described analytically by the following first integral of the corresponding ordinary differential equation:
\begin{equation}\label{catenary}
(z-z_0) \, r^2 \, \frac{d\theta}{ds}= {\rm constant}.
\end{equation}

We study in this section spherical curves satisfying the condition
\begin{equation}\label{k=a/z2}
\kappa(z)=a /z^2, \  a>0.
\end{equation}
So they correspond in \eqref{k mu delta cate} to the election  $\delta =2\sqrt a$.
For any $a>0$, it is easy to prove that there exists an unique angle $\varphi_0 \in (0,\pi/2)$ such that $a=\tan\varphi_0 \sin^2 \varphi_0$. Thus the parallel $z=\sin \varphi_0$ is a constant solution to \eqref{k=a/z2}.

We now apply Theorem~\ref{quadratures} and Remark \ref{c} considering \eqref{k=a/z2} and
$$
\mathcal K (z)=-\frac{a}{z}, \ a>0.
$$
Then we have:
\begin{equation*}
s=s(z)=\int \frac{z\,dz}{\sqrt{z^2(1-z^2)-a^2}},
\end{equation*}
which implies that $a<1/2$ and $1-\sqrt{1-4a^2}<2z^2<1+\sqrt{1-4a^2}$, and it is not difficult to get:  
\begin{equation}\label{z_catenary}
z(s)=\sqrt{\frac{1+\sqrt{1-4a^2}\sin 2s}{2}}.
\end{equation}
In addition, we have that
\begin{equation}
\label{eq catenaries}
d\lambda=\frac{a}{z(1-z^2)}\,ds
\end{equation}

Looking at \eqref{catenary}, taking into account that $r^2+z^2=1$ and $\theta = \lambda$, we deduce from \eqref{eq catenaries} that we get a spherical catenary (with $z_0=0$ and constant $a\in (0,1/2)$). 
However, the explicit computation of $\lambda$ in terms of the arc parameter $s$ requires elliptic integrals of the first and third kind.
As a summary, we have proved the following uniqueness result.
\begin{corollary}\label{cor:catenary}
The spherical catenaries \eqref{eq catenaries} are the only spherical curves (up to rotations around the $z$-axis) with spherical angular momentum $\mathcal K(z)=-a/z$ (and curvature $\kappa (z)=a/z^2$), $0<a<1/2$.
\end{corollary}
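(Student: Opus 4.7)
The plan is to apply Theorem~\ref{quadratures} (equivalently, the three-step algorithm of Remark~\ref{c}) to the prescribed spherical angular momentum $\mathcal K(z)=-a/z$ and then verify that the resulting spherical curve satisfies the first integral \eqref{catenary} defining the spherical catenaries, with $z_0=0$. The uniqueness clause of the corollary is then immediate from the uniqueness part of Theorem~\ref{quadratures}, since two curves sharing the same $\mathcal K(z)$ differ only by a rotation around the $z$-axis.

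First I would check step (i): $\kappa(z)=\mathcal K'(z)=a/z^2$, which matches the hypothesis \eqref{k=a/z2}. For step (ii), the radicand becomes
$$
1-z^2-\mathcal K(z)^2=\frac{z^2(1-z^2)-a^2}{z^2},
$$
and positivity of the numerator $z^2(1-z^2)-a^2$ forces $0<a<1/2$ and restricts $z^2$ to the interval $\bigl(\tfrac{1}{2}(1-\sqrt{1-4a^2}),\,\tfrac{1}{2}(1+\sqrt{1-4a^2})\bigr)$. The substitution $u=z^2$ reduces the integral to an elementary $\arcsin$, which after inversion produces $z(s)$ in the explicit form \eqref{z_catenary}.

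The decisive step is (iii): the formula for $d\lambda/ds$ in Remark~\ref{c} yields
$$
\frac{d\lambda}{ds}=\frac{\mathcal K(z)}{z^2-1}=\frac{a}{z\,(1-z^2)}.
$$
Passing to cylindrical coordinates $(r,\theta,z)$ with $\theta=\lambda$ and $r^2=1-z^2$, this rewrites as
$$
z\,r^2\,\frac{d\theta}{ds}=a,
$$
which is exactly \eqref{catenary} specialized to $z_0=0$ with constant equal to $a$. Therefore the curve is a spherical catenary centred on the equatorial plane, and the uniqueness claim follows at once.

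The main obstacle is not the derivation above, which is essentially algebraic, but the fact that $\lambda(s)$ admits no elementary closed form: substituting \eqref{z_catenary} into the previous display forces elliptic integrals of the first and third kind. For this reason the argument must be content with exhibiting the first integral \eqref{catenary} rather than producing an explicit arc-length parametrization, and the range $0<a<1/2$ is dictated naturally by the positivity condition in step (ii) rather than imposed as an extra hypothesis.
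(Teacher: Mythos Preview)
Your proposal is correct and follows essentially the same route as the paper: apply the three-quadrature algorithm of Theorem~\ref{quadratures}/Remark~\ref{c} with $\mathcal K(z)=-a/z$, obtain the positivity constraint $0<a<1/2$ and the explicit $z(s)$ of \eqref{z_catenary}, then rewrite $d\lambda/ds=\mathcal K(z)/(z^2-1)$ in cylindrical coordinates to recognize the first integral \eqref{catenary} with $z_0=0$. The paper likewise notes that $\lambda(s)$ cannot be made explicit without elliptic integrals of the first and third kind, and the uniqueness comes directly from Theorem~\ref{quadratures}.
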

Combining \eqref{k=a/z2} and \eqref{z_catenary} we have that the intrinsic equation of the spherical catenaries is given by
$$ \kappa (s)=\frac{2a}{1+\sqrt{1-4a^2}\sin 2s}, \ 0<a<1/2. $$

\section{New and classical spherical curves}
The purpose of this section is to find out new curves $\xi=(x,y,z)$ in $\s^2$, expressed in terms of elementary functions or in terms of Jacobi elliptic functions, prescribing their curvature as a function of the distance from the equator in such a way we can avoid the difficulties described in Remark~\ref{difficulties}.
In addition, we provide uniqueness results for some well known spherical curves in terms of the spherical angular momentum introduced in Section \ref{Sect2}.

\subsection{Spherical curves such that $\kappa(\varphi)=p \cos 2 \varphi/ \cos \varphi, \, 0<p<1$}
The purpose of this section is to find out new curves in $\s^2$ expressed in terms of Jacobi elliptic functions prescribing in a suitable way their curvature in terms of their latitude. Concretely, we aim to study the spherical curves whose curvature is given by
\begin{equation}
\label{Jacobian k}
\kappa(\varphi)=\frac{p \cos 2 \varphi}{\cos \varphi}, \ 0<p<1.
\end{equation}
Recalling that $z=\sin \varphi $, \eqref{Jacobian k} is equivalent to
\begin{equation}
\label{Jacobian k z}
\kappa(z)=\frac{p (1-2z^2)}{\sqrt {1-z^2}}, \ 0<p<1.
\end{equation}
We follow the strategy proposed by Theorem~\ref{quadratures} or Remark \ref{c}, considering the spherical angular momentum 
$$\mathcal K (z)=p\,z \sqrt {1-z^2}.$$ Then we get:
\begin{equation*}
s=\int \frac{dz}{\sqrt{(1-z^2)(1-p^2 z^2)}}=\int \frac{d\varphi}{\sqrt{1-p^2 \sin^2 \varphi}}=
F(\varphi,p)=F(\arcsin z, p),
\end{equation*}
where $F(\cdot,p)$ denotes the elliptic integral of first class of modulus $p$ (see e.g.\ \cite{BF71}).
Hence $\varphi(s)= \am (s,p)$ and $z(s)=\sn (s,p)$, 
where $\am(\cdot,p)$ is the Jacobi amplitude and $\sn(\cdot,p)$ is the Jacobi sine amplitude of modulus $p$ (see e.g.\ \cite{BF71}). In addition: 
\begin{equation*}
\lambda (s)=-p \int \frac{\sn (s,p)}{\cn (s,p)}ds
\end{equation*}
where $\cn(\cdot,p)$ is the Jacobi cosine amplitude of modulus $p$. Using formula 316.01 of \cite{BF71}, we finally arrive at the following expression for the longitude:
$$\lambda (s)=-\frac{p}{2p'}\log \left(  \frac{\dn(s,p)+p'}{\dn(s,p)-p'} \right),$$ 
where  $\dn(\cdot,p)$ is the Jacobi delta amplitude of modulus $p$ and $p'=\sqrt{1-p^2}$ is the complementary modulus.

Using \eqref{Jacobian k z} and that $z(s)=\sn (s,p)$, joint to formula 124.02 of \cite{BF71}, we get the intrinsic equation 
$$
\kappa (s) = p\left( 2 \cn(s,p)-1/\cn(s,p) \right), \ 0<p<1,
$$
of the only spherical curves (up to rotations around $z$-axis) with spherical angular momentum  $\mathcal K (z)=p\,z \sqrt {1-z^2}$ or, equivalently, $\mathcal K (\varphi)=(p/2) \sin 2 \varphi$,  $ 0<p<1$. 

These curves are embedded and closed since $\xi (s+4 K(p))=\xi (s)$, where $K(p)$ is the complete elliptic integral of first class of modulus $p$ (see Figure~\ref{Jacobian curves}).
\begin{figure}[h!]
\begin{center}
\includegraphics[height=2.9cm]{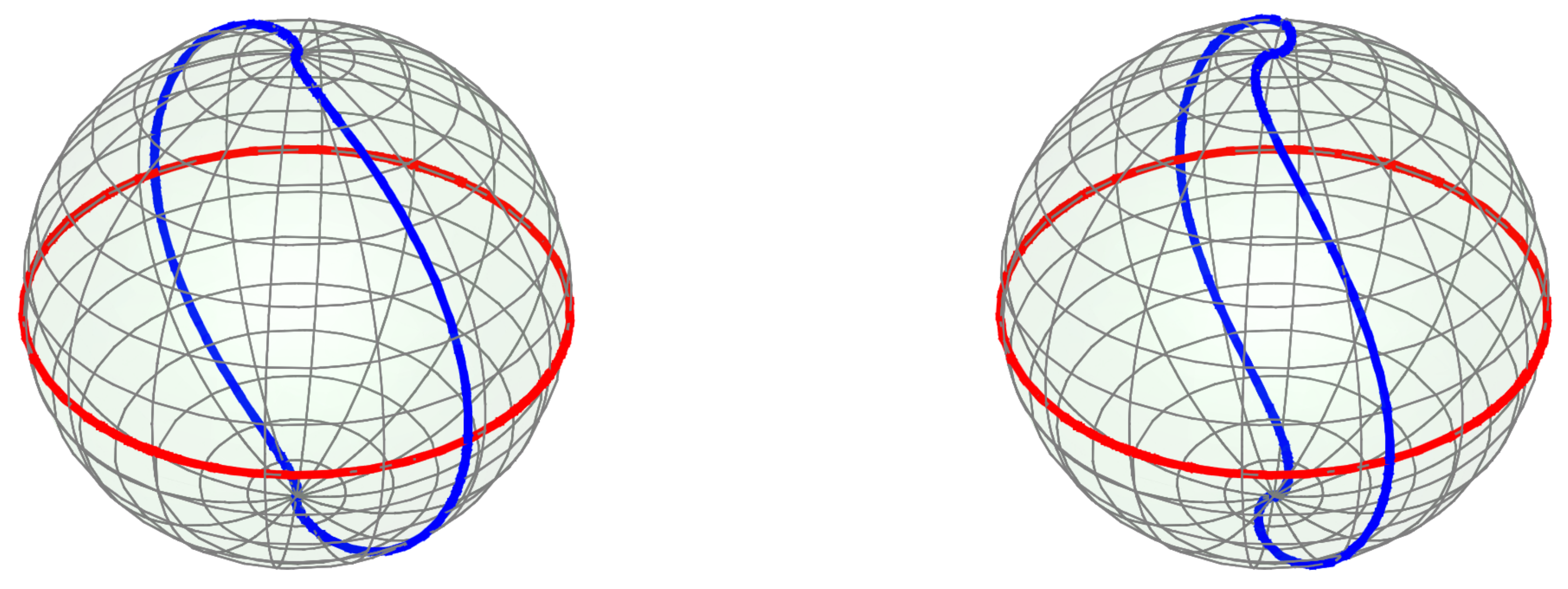}\\
\includegraphics[height=3cm]{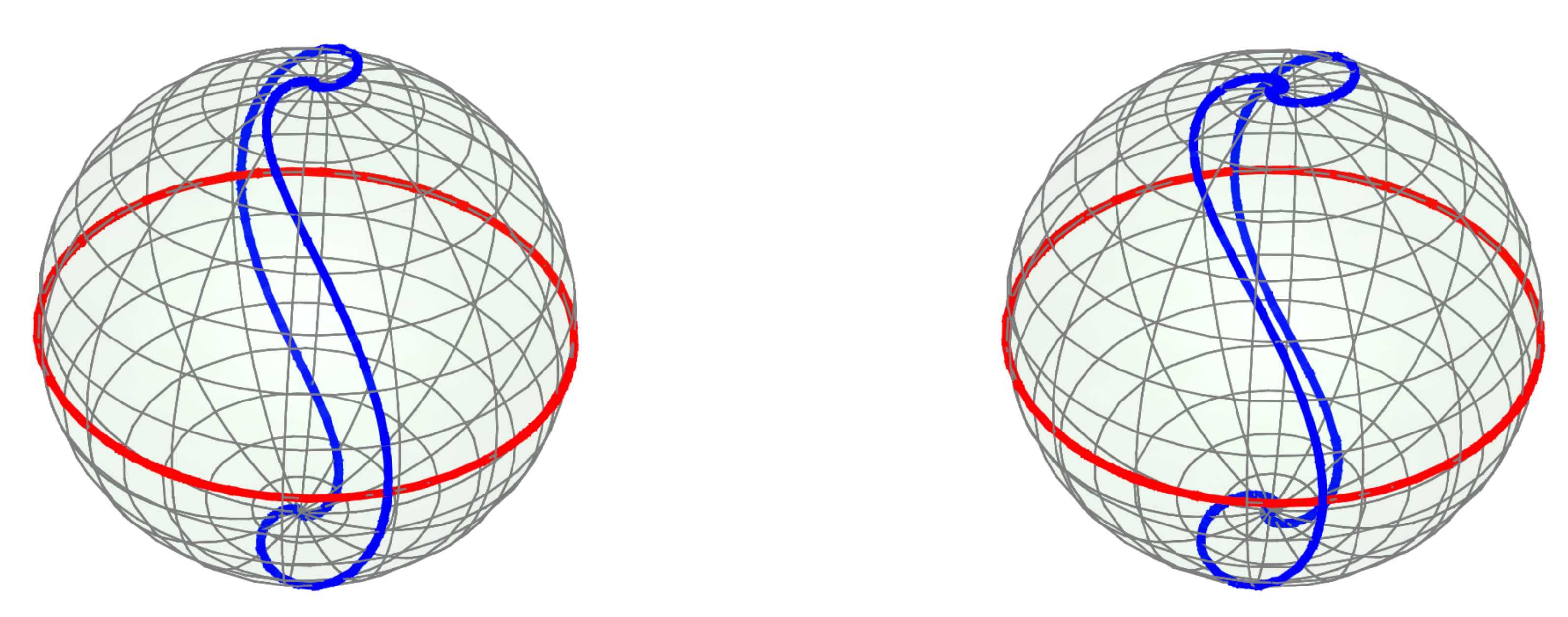}
\caption{Spherical curves with  $\mathcal K (z)\!=\!p\,z \sqrt {1\!-\!z^2}$, $0\!<\!p\!<\!1$.}
\label{Jacobian curves}
\end{center}
\end{figure}

\subsection{Viviani's curve and spherical Archimedean spirals}
The Viviani's curve is the intersection between a sphere of radius $R$ and a cylinder of revolution with diameter $R$ such that a generatrix passes by the centre of the sphere; so this curve is at the same time spherical and cylindrical.
We can obtain a Viviani's curve by sticking the tip of a compass inside a cylinder of revolution and tracing on this cylinder a ``circle'' with radius equal to the cylinder diameter. It was studied by Vincenzo Viviani in 1692 (cf.\ \cite{F93}). In geographical coordinates of $\s^2$, the Viviani's curve can be simply described as $\varphi = \lambda$ (see Figure \ref{Viviani}).

We study in this section spherical curves satisfying the condition
\begin{equation}\label{k=_Viviani}
\kappa(z)=\frac{z(3-z^2)}{(2-z^2)^{3/2}},
\end{equation}
applying Theorem~\ref{quadratures} and Remark \ref{c}, considering the spherical angular momentum
$$
\mathcal K (z)= \frac{z^2-1}{\sqrt{2-z^2}}.
$$
Then we have:
\begin{equation}
\label{s Viv}
s=s(z)=\int \sqrt{\frac{2-z^2}{1-z^2}}\, dz=E(\arcsin z,1/2)
\end{equation}
which involves an elliptic integral $E$ of second kind (see e.g.\ \cite{BF71}).
In addition, we get:
\begin{equation}
\label{long Viv}
d\lambda=\frac{ds}{\sqrt{2-z^2}}
\end{equation}
Using that $z=\sin \varphi$, \eqref{s Viv} and \eqref{long Viv}, we get easily that $d\lambda=d\varphi$.
Hence we have proved the following characterization of the Viviani's curve.
\begin{corollary}\label{cor:Viviani}
The Viviani's curve $\varphi = \lambda$ is the only spherical curve (up to rotations around the $z$-axis) with spherical angular momentum $\mathcal K(\varphi)=-\cos^2 \varphi/\sqrt{1+\cos^2 \varphi}$. 
\end{corollary}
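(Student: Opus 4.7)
The plan is to apply Theorem~\ref{quadratures} with the prescribed spherical angular momentum, taking advantage of the geographical reformulation to bypass explicit inversion of the elliptic integral.

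First I would translate the hypothesis into the $z$-variable: since $z=\sin\varphi$ gives $1-z^2=\cos^2\varphi$ and $2-z^2=1+\cos^2\varphi$, the condition $\mathcal K(\varphi)=-\cos^2\varphi/\sqrt{1+\cos^2\varphi}$ is equivalent to
\[
\mathcal K(z)=\frac{z^2-1}{\sqrt{2-z^2}}.
\]
I would then check admissibility for Theorem~\ref{quadratures}, namely $\mathcal K(z)^2+z^2<1$ for $|z|<1$. A direct simplification gives
\[
1-z^2-\mathcal K(z)^2 \;=\; 1 - z^2 - \frac{(1-z^2)^2}{2-z^2} \;=\; \frac{1-z^2}{2-z^2},
\]
which is strictly positive exactly when $|z|<1$; so the algorithm of Remark~\ref{c} applies.

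Next I would carry out the three quadratures. Step (i) is given (the curvature $\kappa(z)=\mathcal K'(z)$ is computed to match \eqref{k=_Viviani} if desired, but is not needed for the characterization). For step (ii), the formula above yields
\[
ds = \frac{dz}{\sqrt{(1-z^2)/(2-z^2)}} = \sqrt{\frac{2-z^2}{1-z^2}}\,dz,
\]
so $s(z)=E(\arcsin z,1/\sqrt 2)$ is an elliptic integral of the second kind. The key observation is that one need not invert $s(z)$: in step (iii),
\[
d\lambda \;=\; \frac{\mathcal K(z)}{z^2-1}\,ds \;=\; \frac{ds}{\sqrt{2-z^2}} \;=\; \frac{dz}{\sqrt{1-z^2}}.
\]
Since $z=\sin\varphi$ implies $d\varphi=dz/\sqrt{1-z^2}$, we obtain $d\lambda=d\varphi$, hence $\lambda=\varphi+\lambda_0$ for some constant $\lambda_0\in\R$. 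Absorbing $\lambda_0$ by a rotation around the $z$-axis (which Theorem~\ref{quadratures} allows) gives precisely the Viviani equation $\varphi=\lambda$, and conversely uniqueness follows from the uniqueness clause of Theorem~\ref{quadratures}.

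There is no real obstacle here; the proof is essentially a clean execution of the algorithm of Remark~\ref{c}. The only tactically important point is to avoid inverting the elliptic integral $s(z)$ by working with the differential form $d\lambda=ds/\sqrt{2-z^2}$ and converting it directly to $d\varphi$ via the substitution $z=\sin\varphi$; otherwise one would be stuck computing $\lambda$ as an elliptic-type integral and trying to recognize $\varphi$ inside it.
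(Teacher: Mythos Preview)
Your proposal is correct and follows essentially the same route as the paper: translate $\mathcal K(\varphi)$ to $\mathcal K(z)=(z^2-1)/\sqrt{2-z^2}$, compute $ds=\sqrt{(2-z^2)/(1-z^2)}\,dz$, observe $d\lambda=ds/\sqrt{2-z^2}=dz/\sqrt{1-z^2}=d\varphi$, and conclude $\varphi=\lambda$ up to a rotation. Your explicit admissibility check $1-z^2-\mathcal K(z)^2=(1-z^2)/(2-z^2)>0$ and the direct substitution $d\lambda=dz/\sqrt{1-z^2}$ are nice touches that make the argument slightly more self-contained than the paper's presentation.
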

\begin{figure}[h!]
\begin{center}
\includegraphics[height=3cm]{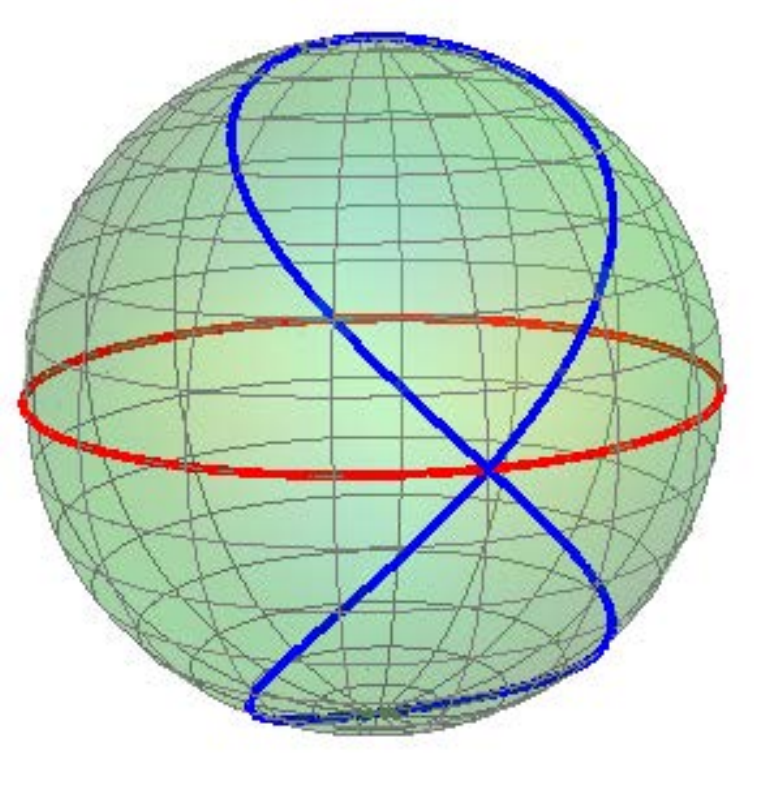}
\caption{Viviani's curve.} 
\label{Viviani}
\end{center}
\end{figure}

The spherical Archimedean spiral curves are natural generalizations of the Viviani's curve, since they are described in geographical coordinates by $\varphi = n \lambda$, $n>0$. A spherical Archimedean spiral is algebraic if and only if $n\in \Q$. They were studied by Guido Grandi in 1728, also called \textit{clelias}. They are the loci of a point $P$ on a meridian of a sphere rotating at constant speed $\omega$ around the polar axis, the point $P$ also moving at constant speed $n\,\omega$ along this meridian (see Figure \ref{spiral}). 
Therefore, physically, we obtain a clelia when peeling an orange or when rewinding regularly a spherical wool ball.
\begin{figure}[h!]
\begin{center}
\includegraphics[height=3cm]{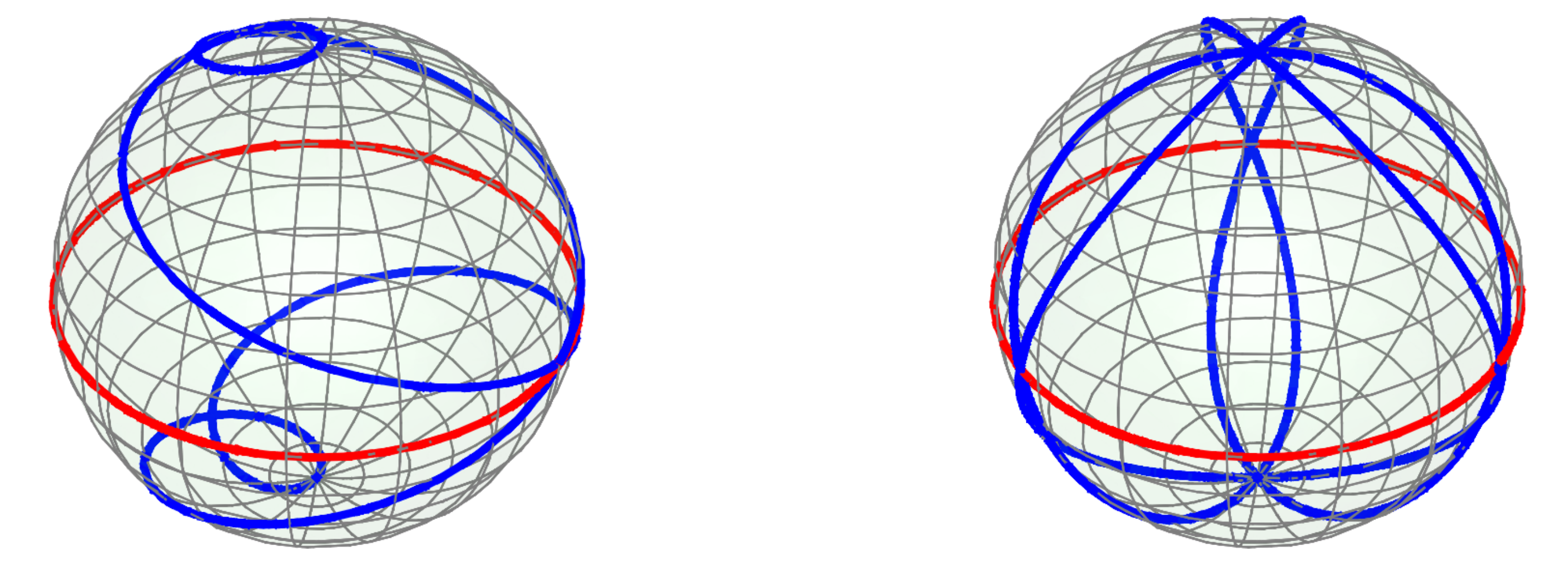}
\caption{Spherical Archimedean spiral curves: $\varphi = n \lambda$, $n=1/3$ and $n=3$.}
\label{spiral}
\end{center}
\end{figure}

A similar argument to the used in the preceding section, considering now
\begin{equation}\label{k=_spirals}
\kappa(z)=\frac{z(2n^2+1-z^2)}{(n^2+1-z^2)^{3/2}}
\end{equation}
give us the following uniqueness result, whose proof we will omit.
\begin{corollary}\label{cor:spirals}
The spherical spiral curves $\varphi = n \lambda$, $n>0$, are the only spherical curves (up to rotations around the $z$-axis) with spherical angular momentum $\mathcal K(\varphi)=-\cos^2 \varphi/\sqrt{n^2+\cos^2 \varphi}$.
\end{corollary}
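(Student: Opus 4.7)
The plan is to mirror the Viviani argument by running the three-step algorithm of Remark \ref{c} and Theorem \ref{quadratures} on the prescribed spherical angular momentum $\mathcal K(\varphi)=-\cos^2\varphi/\sqrt{n^2+\cos^2\varphi}$, and to verify at the end that the resulting relation between $\lambda$ and $\varphi$ is precisely $\varphi=n\lambda$.

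First, I would rewrite everything in terms of $z=\sin\varphi$. Since $\cos^2\varphi=1-z^2$, the prescribed momentum becomes $\mathcal K(z)=-(1-z^2)/\sqrt{n^2+1-z^2}$. As a consistency check, differentiating this should recover \eqref{k=_spirals}: a direct computation gives
\[
\mathcal K'(z)=\frac{2z(n^2+1-z^2)-z(1-z^2)}{(n^2+1-z^2)^{3/2}}=\frac{z(2n^2+1-z^2)}{(n^2+1-z^2)^{3/2}}=\kappa(z),
\]
so the curvature assumption \eqref{k=_spirals} is compatible with this choice of $\mathcal K$, and by Theorem \ref{quadratures} the resulting curve is uniquely determined up to rotation around the $z$-axis.

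Next, I would run the step (ii) of Remark \ref{c}. The key simplification is
\[
1-z^2-\mathcal K(z)^2=(1-z^2)-\frac{(1-z^2)^2}{n^2+1-z^2}=\frac{n^2(1-z^2)}{n^2+1-z^2},
\]
so that $ds=\frac{1}{n}\sqrt{\frac{n^2+1-z^2}{1-z^2}}\,dz$. Changing back to the latitude via $dz=\cos\varphi\,d\varphi$ and $1-z^2=\cos^2\varphi$ yields the clean expression
\[
ds=\frac{1}{n}\sqrt{n^2+\cos^2\varphi}\ d\varphi .
\]

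Finally, for step (iii), plug $\mathcal K/(z^2-1)=\cos^2\varphi/(\sqrt{n^2+\cos^2\varphi}\cdot\cos^2\varphi)=1/\sqrt{n^2+\cos^2\varphi}$ into \eqref{long} to obtain $d\lambda=ds/\sqrt{n^2+\cos^2\varphi}$. Combining this with the previous expression for $ds$ gives the striking cancellation $d\lambda=d\varphi/n$, so $\varphi=n\lambda$ up to the (irrelevant, by Theorem \ref{quadratures}) rotation around the $z$-axis. Since this identifies the curve with a spherical Archimedean spiral, the uniqueness claim follows. I do not anticipate a real obstacle here: the only non-trivial point is spotting the factorization that makes $1-z^2-\mathcal K(z)^2$ collapse to a perfect ratio, exactly as in the Viviani case which corresponds to $n=1$.
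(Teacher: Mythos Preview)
Your proposal is correct and follows exactly the route the paper intends: the paper omits the proof, stating only that ``a similar argument to the used in the preceding section'' (the Viviani case $n=1$) yields the result, and your three-step computation via Remark~\ref{c} and Theorem~\ref{quadratures} is precisely that argument carried out in full.
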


\subsection*{Acknowledgements}
The authors were partially supported by State Research Agency and European Regional Development Fund via the grant no. PID\ 2020-117868GB-I00 supported by MCIN/AEI/10.13039/ 501100011033.
Second named author also by MICINN/FEDER project PGC2018-097046-B-I00 and Fundaci\'on S\'eneca project 19901/GERM/15, Spain. 
Third named author was supported by Predoc grant no. FPU16/03096 supported by MCIN/AEI/10.13039/501100011033.
%

We would also like to thank \'Alvaro P\'ampano for helpful discussion about the topic, specially for comments concerning Remark \ref{rem:functional}.

\vspace{0.4cm}

\end{document}